\newtheorem{Theorem}{Theorem}[section]
\newtheorem{Definition}[Theorem]{Definition}
\newtheorem{Lemma}[Theorem]{Lemma}
\newtheorem{Corollary}[Theorem]{Corollary}
\newtheorem{Remark}[Theorem]{Remark}
\newcommand{\N}{\mathbb N}
\newcommand{\RR}{{{\rm I} \kern -.15em {\rm R} }}
\newcommand{\C}{{{\rm l} \kern -.42em {\rm C} }}
\newcommand{\nat}{{{\rm I} \kern -.15em {\rm N} }}
\newcommand{\be}{\begin{equation}}
\newcommand{\ee}{\end{equation}}
\newcommand{\beq}{\begin{eqnarray}}
\newcommand{\eeq}{\end{eqnarray}}
\newcommand{\beqs}{\begin{eqnarray*}}
\newcommand{\eeqs}{\end{eqnarray*}}
\newcommand{\bt}{\begin{Theorem}}
\newcommand{\et}{\end{Theorem}}
\newcommand{\br}{\begin{Remark}}
\newcommand{\er}{\end{Remark}}
\newcommand{\bc}{\begin{Corollary}}
\newcommand{\ec}{\end{Corollary}}
\newcommand{\bl}{\begin{Lemma}}
\newcommand{\el}{\end{Lemma}}
\newcommand{\bd}{\begin{definition}}
\newcommand{\ed}{\end{definition}}
\renewcommand{\geq}{\geqslant}
\renewcommand{\ge}{\geqslant}
\renewcommand{\leq}{\leqslant}
\renewcommand{\le}{\leqslant}
\title{Convergence to consensus of the general finite-dimensional Cucker-Smale model with time-varying delays}
\author{
Cristina Pignotti\footnote{Dipartimento di Ingegneria e Scienze dell'Informazione e Matematica, Universit\`{a} di L'Aquila, Via Vetoio, Loc. Coppito, 67010 L'Aquila Italy (\texttt{pignotti@univaq.it}).}
\and Emmanuel Tr\'elat \footnote{Sorbonne Universit\'es, UPMC Univ Paris 06, CNRS UMR 7598, Laboratoire Jacques-Louis Lions, 4 place Jussieu, 75005, Paris, France (\texttt{emmanuel.trelat@upmc.fr}).}
}
\date{}
\begin{document}

\maketitle

\begin{abstract}
We consider the celebrated Cucker-Smale model in finite dimension, modelling interacting collective dynamics and their possible evolution to consensus. The objective of this paper is to study the effect of time delays in the general model.
By a Lyapunov functional approach, we provide convergence results to consensus for symmetric as well as nonsymmetric communication weights under some structural conditions.
\end{abstract}

\vspace{5 mm}

\def\qed{\hbox{\hskip 6pt\vrule width6pt
height7pt
depth1pt  \hskip1pt}\bigskip}



\section{Introduction}

\setcounter{equation}{0}
The study of collective behavior of autonomous agents has recently attracted great interest in various scientific applicative areas, such as biology, sociology, robotics, economics (see \cite{Axe, Aydogdu, Bullo, Camazine, CFTV, Couzin, HK, Hel, Jack, Krause, Parrish, Perea, Toner}). The main motivation is to model and explain the possible emergence of self-organization or global pattern formation in a large group of agents having mutual interactions, where individual agents may interact either globally or even only at the local scale. 

The well known Cucker-Smale model has been proposed and studied in \cite{CS1, CS2} as a paradigmatic model for flocking, namely for modelling the evolution of dynamics where autonomous agents reach a consensus based on limited environmental information.
Consider $N\in \N$ agents and let $(x_i(t), v_i(t))\in\RR^{2d},$ $i=1,\dots, N,$ be their phase-space coordinates. One can think of $x_i(t)\in\RR^d$ by standing for the position of the $i^{\textrm{th}}$ agent and $v_i(t)\in\RR^d$ for its velocity, but for instance in social sciences these variables may stand for other notions such as opinions.
The general finite-dimensional Cucker-Smale model is the following:
\begin{equation}\label{standard}
\begin{split}
\dot x_i(t)&= v_i(t),\\
\dot v_i(t)&=\frac {\lambda}N\sum_{j=1}^N \psi_{ij}(t)(v_j(t)-v_i(t)),\qquad i=1,\dots, N,
\end{split}
\end{equation}
where the parameter $\lambda$ is a nonnegative coupling strength and the communication $\psi_{ij}(t)$ are of the form
\begin{equation*}\label{potential}
\psi_{ij}(t)=\psi (\vert x_i(t)-x_j(t)\vert ).
\end{equation*}
The function $\psi$ is called the potential.
Here and throughout, the notation $\vert\cdot\vert$ stands for the Euclidean norm in $\RR^d$.
Along any solution of (\ref{standard}), we define the (position and velocity) variances
\begin{equation*}\label{X}
X(t)=\displaystyle { \frac 1 {2 N^2}\sum_{i,j=1}^N\vert x_i(t)-x_j(t)\vert^2}
\end{equation*}
and
\begin{equation}\label{V}
\displaystyle {V(t)= {\frac{1}{2N^2}} \sum_{i,j=1}^N\vert v_i(t)-v_j(t)\vert^2}.
\end{equation}
\begin{Definition}{\rm
We say that a solution of (\ref{standard}) converges to consensus (or flocking) if
\begin{equation*}\label{cons}
\sup_{t>0} X(t)<+\infty\quad\quad\mbox {\rm and}\quad \quad \lim_{t\rightarrow +\infty} V(t)=0.
\end{equation*}
}\end{Definition}

The potential initially considered by Cucker and Smale in \cite{CS1, CS2} is the function $\psi(s)=\frac 1
{(1+s^2)^\beta }$ with $\beta\geq 0$. They proved that there  is
unconditional convergence to flocking whenever
 $\beta <\frac 1 2$.
If $\beta\geq 1/2$, there is convergence to flocking under appropriate assumptions on the values of the initial variances on positions and speeds (see \cite{HaHaKim}). Their analysis relies on a Lyapunov approach with quadratic functionals, which we will refer to in the sequel as an \textit{$L^2$ analysis}. This approach allows to treat symmetric communication rates.
An extension of the flocking result to the case of nonsymmetric communication rates has been proposed by Motsch and Tadmor \cite{MT}, with a different approach that we will refer to in the sequel as an \textit{$L^\infty$ analysis}, which we will describe further.

From the mathematical point of view, there have been a number of generalizations and of results on convergence to consensus for variants of Cucker-Smale models, involving more general potentials (friction, attraction-repulsion), cone-vision constraints, leadership (see \cite{Cavagna, CuckerDong, HaSlemrod, Mech, MT_SIREV, Vicsek, Yates}), clustering emergence (see \cite{Jabin, MT}), social networks (see \cite{Bellomo}), pedestrian crowds (see \cite{Cristiani, Lemercier}), stochastic or noisy models (see \cite{CuckerMordecki, HaLee}), kinetic models in infinite dimension (see \cite{Albi, Bellomo, canuto, carfor, degond, HT, Toscani}), and the control of such models (see \cite{Borzi, Caponigro1, Caponigro2, PRT, Wongkaew}).

\paragraph{Cucker-Smale with time-varying delays.}
In the present paper, we introduce time-delays in the Cucker-Smale model and we perform an asymptotic analysis of the resulting model. Time-delays reflect the fact that, for a given individual agent, information from other agents does not always propagate at infinite speed and is received after a certain time-delay, of reflect the fact that the agent needs time to elaborate its reaction to incoming stimuli.

We assume throughout that the delay $\tau (t) >0$ is time-varying. This models the fact that the amplitude of the delay may exhibit some seasonal effects or that it depends on the age of the agents for instance.
Our model is the following:
\begin{equation}\label{delayModel}
\boxed{
\begin{split}
\dot x_i(t) &= v_i(t),\\
\dot v_i(t) &= \frac {\lambda}N\sum_{\stackrel{j=1}{j\neq  i}}^N \psi_{ij}(t-\tau (t))(v_j(t-\tau (t) )-v_i(t)),\quad i=1,\dots, N,
\end{split}
}
\end{equation}
with initial conditions 
\begin{equation*}\label{IC}
x_i(t)=f_i(t), \quad 
v_i(t)=g_i(t), \qquad t\in [-\tau (0),0],
\end{equation*}
where
 $f_i, g_i: [-\tau (0) , 0]\rightarrow \RR$ are given functions
and $\psi_{ij}(t),$ $i,j=1,\dots, N,$ are suitable communication rates.
In the symmetric case, we have
\begin{equation}\label{potential1}
\psi_{ij}(t)=\psi (\vert x_i(t)-x_j(t)\vert ),\quad i,j \in \{1,\dots, N\}.
\end{equation}
The time-delay function is assumed to be bounded: we assume that there exists $\overline \tau>0$ such that
\begin{equation}\label{tau1}
0\le\tau (t)\le\overline\tau ,\quad \forall\ t>0.
\end{equation}
We assume moreover that the function $t\mapsto \tau(t)$ is almost everywhere differentiable, and that there exists $c>0$ such that
\begin{equation}\label{tau3}
\vert\tau^\prime (t)\vert\le c<1, \quad \forall \ t>0.
\end{equation}
The potential function $\psi:[0, +\infty )\rightarrow (0, +\infty )$  in (\ref{potential1}) is assumed to be continuous and bounded.
Without loss of generality (if necessary, do a time reparametrization), we assume that
\begin{equation}\label{uno}
\psi_{ij}(t) \le 1, \quad \forall t\in [-\tau (0), +\infty ),\quad \forall i,j\in \{1,\dots, N\}.
\end{equation}

Note that, in the model \eqref{delayModel} above, not only the delay is time-varying but also, more importantly, there is no delay in $v_i$ in the equation for velocity $v_i$. This assumption in our model is realistic because one expects that every agent receives information coming from the other agents with a certain delay while its own velocity is known exactly at every time $t$, but this makes the analysis considerably more complex, as we explain below.

\paragraph{State of the art.}
Simpler delay Cucker-Smale models have been considered in several contributions, with a constant delay $\tau >0.$

Firstly, a time-delayed model has been introduced and studied in \cite{Delay2}, where the equation for velocities (which actually also involves noise terms in that paper) is
 \begin{equation*}\label{geometric}
 \dot v_i(t)= \frac {\lambda}N\sum_{j=1}^N \psi_{ij}(t-\tau )(v_j(t-\tau )-v_i(t-\tau )) ,
\end{equation*}
with a constant delay $\tau$.
Considering $v_i(t-\tau)$ instead of $v_i(t)$ in the equation for $v_i(\cdot)$ is less natural because one can suppose that only the information on the velocities  of the other agents is known with a delay $\tau$.
However, this assumption in the model makes the analysis much easier because it allows to keep one of the most important features of the standard Cucker-Smale system \eqref{standard}, namely the fact that the mean velocity $\bar v(t)=\frac 1 N \sum_{i=1}^N v_i(t)$ remains constant, i.e., $\dot{\bar v}(t)=0$, as in the undelayed Cucker-Smale model. This fact significantly simplifies the arguments in the asymptotic analysis. 
In contrast, the mean velocity is not constant for our model \eqref{delayModel}, which makes the problem much more difficult to address.

Secondly, in \cite{Delay1} the authors consider as equation for the velocities
\begin{equation}\label{WU}
\dot v_i(t)= \alpha \sum_{j=1}^N a_{ij}(t-\tau) (v_j(t-\tau) -v_i (t)),
\end{equation}
where $\alpha>0$ and the coupling coefficients $a_{ij}$ are such that $\sum_{j=1}^N a_{ij}=1$, $i=1,\dots, N$. Compared with (\ref{delayModel}), the sum is running over \textit{all} indices, including $i$, and thus \eqref{WU} involves, with respect to \eqref{delayModel},  the additional term $a_{ii}(t-\tau )(v_i(t-\tau )-v_i(t))$ at the right-hand side. But on the one part, this term has no physical meaning. On the other part, the authors of \cite{Delay1} claim to study (\ref{delayModel}) but their claim is actually erroneous and their result (unconditional flocking for all delays) actually only applies to (\ref{WU}) (cf \cite[Eq. (7)]{Delay1}).
Note that (\ref{WU})  can be rewritten as
\begin{equation}\label{WU2}
\dot v_i(t)= \alpha \sum_{j=1}^N a_{ij}(t-\tau) v_j(t-\tau) -\alpha v_i (t),
\end{equation}
with a negative coefficient, independent of the time $t$, for the undelayed velocity $v_i(t)$ of the $i^{\textrm{th}}$ agent.
This allows to obtain a strong stability result: unconditional flocking for all time delays.

Thirdly, in the recent paper \cite{Choi}, the authors analyze a Cucker-Smale model with delay and normalized  communication weights $\Phi_{ij}$ given by
\begin{equation}\label{potentialChoi}
\Phi_{ij}(x,\tau )=\left\{
\begin{array}{l}
\displaystyle{
\frac {\psi(\vert x_j (t-\tau )-x_i(t)\vert )}
{\sum_{k\ne i} \psi (\vert x_k (t-\tau )-x_i(t)\vert )}
\quad \mbox {\rm if}\ j\ne i,}\\
0\hspace{4.8 cm}\mbox{\rm if}\ j=i,
\end{array}
\right.
\end{equation}
where the influence function $\psi$ is assumed to be bounded, nonincreasing, Lipschitz continuous on $[0, +\infty),$ with $\psi (0)=1.$
Thus, in practice, due to the assumption $\sum_{j=1}^N\Phi_{ij}=1$,
their model can be written as
\begin{equation*}\label{ChoiH}
\dot v_i(t)=  \sum_{j=1}^N \Phi_{ij}(x, t-\tau) v_j(t-\tau) -v_i (t),
\end{equation*}
to which the same considerations than for the model (\ref{WU2}) apply.
Moreover, the particular form of the communication weights $\Phi_{ij}$ allows to apply some convexity arguments in order to obtain the flocking result for sufficiently small delays. Then, the result strictly relies on the specific form of the interaction between the agents.
Note also that the influence function $\psi$ in the definition (\ref{potentialChoi}) of $\Phi_{ij}$ has as arguments $\vert x_k(t-\tau )-x_i(t)\vert,$ $k=1, \dots, N,$ $k\ne i,$ with the state of the $i^{\textrm{th}}$ agent at the time $t$ and the states of the other agents at time $t-\tau.$
This fact does not seem to have physical meaning, but it allows to easily derive the mean-field limit of the problem at hand by obtaining a nice and tractable kinetic equation.
In contrast, putting the time-delay also in the state of $i^{\textrm{th}}$ agent is more suitable to describe the physical model but it makes unclear (at least to us) the passage to mean-field limit (see Section \ref{Conclusion}).

\paragraph{Framework and structure of the present paper.}
In Section \ref{F}, we consider the model (\ref{delayModel}) with \textit{symmetric} interaction weights $\psi_{ij}$ given by (\ref{potential1}). In this symmetric case, we perform a $L^2$ analysis, designing appropriate quadratic Lyapunov functionals adapted to the time-delay framework. The main result, Theorem \ref{flock}, establishes convergence to consensus for small enough time-delays.

As in \cite{Delay2}, a structural assumption is required on the matrix of communication rates.
We define the $N\times N$ Laplacian matrix $L=(L_{ij})$ by
$$
L_{ij} = -\frac {\lambda} N\psi_{ij}, \quad \mbox{\rm for}\ i\neq  j,\quad\quad L_{ii}=\frac{\lambda}N\sum_{j\neq  i} \psi_{ij},
$$
with $\psi_{ij} = \psi(\vert x_i-x_j\vert))$.
The matrix $L$ is symmetric, diagonally dominant with nonnegative diagonal entries, has nonnegative eigenvalues, and its smallest eigenvalue is zero.
Considering the matrix $L(t)$ along a trajectory solution of \eqref{delayModel}, we denote by $\mu(t)$  its smallest positive eigenvalue, also called \textit{Fiedler number}.
The \textit{structural assumption} that we make throughout is the following:
\begin{equation}\label{structural}
\exists  \gamma >0 \ \mid \ \mu(t)\geq\gamma, \quad \forall t>0.
\end{equation}
This is guaranteed for instance if the communication rates are uniformly bounded away from zero, i.e., if there exists $\psi^*>0$ such that $\psi_{ij}(t)\geq  \psi^*$ for all $i,j$ and $t>0$ (but in that case of course there is unconditional convergence to consensus for the undelayed model).

In Section \ref{FL}, we consider the model (\ref{delayModel}) with possibly \textit{nonsymmetric} potentials:
\begin{equation*}
\begin{split}
\dot x_i(t) &= v_i(t),\\
\dot v_i(t) &= \frac{\lambda}{{N}} \sum_{j\neq  i} a_{ij}(t-\tau (t))(v_j(t-\tau (t) )-v_i(t)),\quad i=1,\dots, N,
\end{split}
\end{equation*}
where the communication rates $a_{ij}>0$ are arbitrary. They may of course be symmetric as above, e.g.
\begin{equation}\label{potsym}
a_{ij}(t)=  {\psi (\vert x_i(t)-x_j(t)\vert )},
 \end{equation}
or nonsymmetric, for instance
\begin{equation}\label{potnonsym}
a_{ij}(t)= \frac {N\psi (\vert x_i(t)-x_j(t)\vert )} {\sum_{k=1}^N \psi (\vert x_k(t)-x_i(t)\vert )},
\end{equation}
for a suitable bounded function $\psi.$
To analyze such models, we perform a $L^\infty$ analysis as in \cite{MT}, by considering, instead of Euclidean norms, the time-evolution of the diameters in position and velocity phase space. The main result, Theorem \ref{flock2}, establishes convergence to consensus under appropriate assumptions.

In Section \ref{Conclusion}, we provide a conclusion and further comments.

\section{Consensus for symmetric potentials: $L^2$ analysis}\label{F}
\setcounter{equation}{0}

\subsection{The main result}
\paragraph{Several notations.}
Following \cite{CS1}, we set
\begin{equation*}
\Delta = \left\{ (v_1,v_2, \ldots, v_N)\in(\RR^d)^N\ \mid\  v_1=\cdots=v_N\right\} = \left\{ (v,v, \ldots, v)\ \mid\ v\in \RR^d \right\} .
\end{equation*}
The set $\Delta$ is the eigenspace of $L$ associated with the zero eigenvalue.
Its orthogonal in $(\RR^d)^N$ is
$$
\Delta^\perp = \left\{ (v_1,v_2, \ldots, v_N)\in(\RR^d)^N\ \mid\  \sum_{i=1}^N v_i=0 \right\} .
$$
Given any ${\bf v}=(v_1,v_2, \ldots, v_N)\in(\RR^d)^N$, we denote the mean by $\bar v=\frac{1}{N}\sum_{j=1}^N v_j\in\RR^d$, and we define ${\bf w}= (w_1, \dots, w_N)\in (\RR^d)^N$ by
\begin{equation*}
w_i=v_i-\bar v, \quad i=1,\dots, N,
\end{equation*}
so that
$$
{\bf v}=(\bar v,\ldots,\bar v) + {\bf w} \in \Delta + \Delta^\perp,
$$
and we have  $L{\bf w}=L{\bf v}.$
Moreover,
\begin{equation}\label{1}
{\frac{1}{2N^2}}\sum_{i,j=1}^N \vert w_i-w_j\vert^2=\frac{1}{{N}}\Vert {\bf w}\Vert^2 ,
\end{equation}
and
\begin{equation}\label{2}
\langle L {\bf v}, {\bf v}\rangle =\frac 1 2 \frac {\lambda} N\sum_{i,j=1}^N\psi_{ij} \vert v_i-v_j\vert^2.
\end{equation}

\begin{Theorem}\label{flock}
Under the structural assumption $(\ref{structural})$, setting
\begin{equation}\label{threshold}
\tau_0= \frac {\gamma^2}{{2}\lambda^2 } \frac {1-c} {2\lambda^2+\gamma^2},
\end{equation}
if $\overline \tau^2 e^{\overline\tau }\in(0,\tau_0)$, then every solution of system $(\ref{delayModel})$ satisfies
\begin{equation}\label{Flock1}
V(t) \leq  C e^{-r t}\;,
\end{equation}
with
\begin{equation}\label{erre}
r=\min \Big \{\,\gamma- 4 \frac {\lambda^2}{\gamma} \frac {\lambda^2\overline \tau^2} {(1-c)e^{-\overline\tau }-2\lambda^2\overline \tau^2}\,, 1 \,\Big \},
\end{equation}
\begin{equation*}\label{valueC}
C= V(0) + \frac {\lambda^2\overline \tau}{\gamma N}\frac 1 {(1-c)e^{-\overline\tau }-2\lambda^2\overline \tau^2 } \int_{-\tau (0)}^0 e^s \int_s^0\sum_{i=1}^N \vert \dot v_i(\sigma )\vert^2 d\sigma\, ds.
\end{equation*}
\end{Theorem}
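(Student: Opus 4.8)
The plan is to design a Lyapunov--Krasovskii functional adapted to the time-delay, of the form
\[
\mathcal L(t) = V(t) + K\int_{t-\tau(t)}^t e^{s-t}\int_s^t \sum_{i=1}^N|\dot v_i(\sigma)|^2\,d\sigma\,ds =: V(t)+K\,G(t),
\]
and to establish the differential inequality $\mathcal L'(t)\le -r\,\mathcal L(t)$ for almost every $t$, whence $V(t)\le\mathcal L(t)\le\mathcal L(0)e^{-rt}=Ce^{-rt}$ by Gr\"onwall, the value $\mathcal L(0)=C$ matching the stated constant once $K$ is fixed below. First I would write $V(t)=\frac1N\|{\bf w}\|^2$ with $w_i=v_i-\bar v$ as in (\ref{1}), note $\sum_i w_i=0$ so that ${\bf w}\in\Delta^\perp$, and compute, using $\sum_i w_i=0$ to kill the $\dot{\bar v}$ contribution, that $\dot V(t)=\frac2N\sum_i\langle\dot v_i,w_i\rangle$. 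Substituting the dynamics (\ref{delayModel}) and splitting the delayed velocity as $v_j(t-\tau(t))=v_j(t)-\int_{t-\tau(t)}^t\dot v_j(s)\,ds$, the undelayed part reproduces $-\frac2N\langle L(t-\tau(t)){\bf w},{\bf w}\rangle$, where $L(t-\tau(t))$ is the Laplacian built from the delayed weights $\psi_{ij}(t-\tau(t))$; by the structural assumption (\ref{structural}) applied at the delayed argument this is $\le-2\gamma V(t)$, using (\ref{2}). The remainder is an error term $B$ involving $\int_{t-\tau(t)}^t\dot v_j$.

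Next I would estimate $B$ by Young's inequality with a free parameter $\alpha$, Cauchy--Schwarz in the form $|\int_{t-\tau(t)}^t\dot v_j|^2\le\tau(t)\int_{t-\tau(t)}^t|\dot v_j|^2$, and the normalization $\psi_{ij}\le1$ from (\ref{uno}), to obtain $|B|\le\lambda\alpha V(t)+\frac{\lambda\overline\tau}{\alpha N}\int_{t-\tau(t)}^t\sum_j|\dot v_j|^2\,ds$; the choice $\alpha=\gamma/\lambda$ makes the first term exactly $\gamma V(t)$, leaving $\dot V\le-\gamma V+\frac{\lambda^2\overline\tau}{\gamma N}\int_{t-\tau(t)}^t\sum_j|\dot v_j|^2$. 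Differentiating $G(t)$ — where the time-varying delay produces the factor $1-\tau'(t)\ge1-c$ from (\ref{tau3}) at the moving lower endpoint and the weight $e^{s-t}$ produces a $-G(t)$ term — yields
\[
G'(t)\le -G(t)-(1-c)e^{-\overline\tau}\int_{t-\tau(t)}^t\sum_j|\dot v_j|^2\,d\sigma+\overline\tau\sum_j|\dot v_j(t)|^2 .
\]

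The main obstacle is the surviving pointwise term $\sum_j|\dot v_j(t)|^2$: unlike in the models of \cite{Delay1,Delay2,Choi}, here $\bar v$ is not conserved, so this term cannot be dropped and must itself be controlled. I would re-use the equation for $\dot v_j$, decomposing once more $v_k(t-\tau)=v_k(t)-\int_{t-\tau(t)}^t\dot v_k$ and using $\|L(t-\tau(t)){\bf w}\|\le2\lambda\|{\bf w}\|$ (the Laplacian's top eigenvalue being $\le2\lambda$ under (\ref{uno})), to derive a self-consistent bound of the form $\sum_j|\dot v_j(t)|^2\le 4\lambda^2 N\,V(t)+2\lambda^2\overline\tau\int_{t-\tau(t)}^t\sum_j|\dot v_j|^2\,ds$, the constants being exactly those that reproduce (\ref{threshold}) and (\ref{erre}). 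Substituting this into $\mathcal L'=\dot V+KG'$ and collecting, the coefficient of $\int_{t-\tau(t)}^t\sum_j|\dot v_j|^2$ is annihilated by the choice $K\big[(1-c)e^{-\overline\tau}-2\lambda^2\overline\tau^2\big]=\lambda^2\overline\tau/(\gamma N)$, which is precisely the $K$ appearing in $C$, positivity of the bracket being guaranteed by $\overline\tau^2e^{\overline\tau}<\tau_0$. The residual coefficient of $V(t)$ is then $-\big(\gamma-4\frac{\lambda^2}{\gamma}\frac{\lambda^2\overline\tau^2}{(1-c)e^{-\overline\tau}-2\lambda^2\overline\tau^2}\big)$, negative exactly under the threshold (\ref{threshold}), while the Krasovskii part decays at rate $1$; hence $\mathcal L'(t)\le-r\,\mathcal L(t)$ with $r$ as in (\ref{erre}), and (\ref{Flock1}) follows. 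Preliminarily I would record well-posedness of (\ref{delayModel}) by the method of steps and the fact that $t\mapsto V(t)$ is locally Lipschitz, so that all identities hold almost everywhere; the only delicate point is availability of the structural bound at the delayed time $t-\tau(t)$, which I would handle by reading (\ref{structural}) along the whole trajectory.
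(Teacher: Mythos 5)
Your proposal follows essentially the same route as the paper: the same Lyapunov--Krasovskii functional (yours is twice the paper's, with $K=2\beta/N$), the same dissipation estimate for $V(t)$ obtained from the Fiedler-number assumption and Young's inequality with parameter $\gamma/\lambda$, and the same self-consistent bound on $\sum_j\vert\dot v_j(t)\vert^2$ (the paper's Lemma \ref{cru}), leading to the identical choice of $K$ annihilating the integral term and the same rate $r$. The only minor quibble is your parenthetical justification of that last bound via the operator norm $\Vert L\Vert\leq 2\lambda$, which combined with $(a+b)^2\leq 2a^2+2b^2$ would give $8\lambda^2 N V(t)$ rather than the stated $4\lambda^2 N V(t)$; the paper reaches the constant $4$ by a componentwise triangle inequality followed by Cauchy--Schwarz, so the constants in (\ref{threshold}) and (\ref{erre}) come out as claimed.
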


\begin{Remark}{\rm
The threshold $\tau_0$ (given by (\ref{threshold})) on the time-delay depends on the parameter $\lambda$ and on the lower bound $\gamma$ in (\ref{structural}) for the Fiedler number.
}\end{Remark}

\subsection{Proof of Theorem \ref{flock}}
We start with the following lemma.

\begin{Lemma}\label{cru}
We consider an arbitrary solution $({\bf x}(\cdot),{\bf v}(\cdot))$ of \eqref{delayModel}.
Setting
\begin{equation}\label{NUM}
R_\tau (t)=\frac 1 N \int_{t-\tau (t)}^t \sum_{i=1}^N \left \vert\dot v_i(s )
\right \vert^2 ds ,
\end{equation}
we have
\begin{equation}\label{stimacru}
\frac 1 N\sum_{i=1}^N \left \vert  \dot v_i(t)\right \vert^2
\leq  4\frac {\lambda^2} N \Vert {\bf w}(t)\Vert^2 +2\lambda^2\overline \tau R_\tau (t),
\end{equation}
for every $t>0$.
\end{Lemma}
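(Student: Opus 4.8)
The plan is to separate, in the right-hand side of the velocity equation, the genuinely \emph{undelayed} contribution (built only from the states at time $t$) from a remainder that encodes the effect of the delay, and then to estimate the two pieces independently. First I would use the fundamental theorem of calculus to write, for each pair $(i,j)$,
\[
v_j(t-\tau(t)) = v_j(t) - \int_{t-\tau(t)}^t \dot v_j(s)\,ds,
\]
so that, inserting this into \eqref{delayModel}, the velocity equation splits as $\dot v_i(t) = A_i(t) - B_i(t)$ with
\[
A_i(t)=\frac{\lambda}{N}\sum_{j\neq i}\psi_{ij}(t-\tau(t))\big(v_j(t)-v_i(t)\big),
\qquad
B_i(t)=\frac{\lambda}{N}\sum_{j\neq i}\psi_{ij}(t-\tau(t))\int_{t-\tau(t)}^t \dot v_j(s)\,ds.
\]
Then I would apply the elementary bound $|\dot v_i|^2\le 2|A_i|^2+2|B_i|^2$ and treat the two families of terms separately, summing over $i$ only at the end.

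For the $A_i$ terms I would first discard the weights by means of the normalization $\psi_{ij}\le 1$ from \eqref{uno}, and then apply the Cauchy--Schwarz inequality in the summation index $j$, which introduces a factor $N$ and yields $|A_i(t)|^2\le \frac{\lambda^2}{N}\sum_{j=1}^N|v_i(t)-v_j(t)|^2$. Summing over $i$ gives $\sum_i|A_i(t)|^2\le \frac{\lambda^2}{N}\sum_{i,j}|v_i(t)-v_j(t)|^2$, and here I would use that $v_i-v_j=w_i-w_j$ (the mean $\bar v$ cancels) together with the identity \eqref{1}, which reads $\frac{1}{2N^2}\sum_{i,j}|v_i-v_j|^2=\frac1N\Vert{\bf w}\Vert^2$. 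After multiplying by the factor $2$ coming from the elementary inequality, this produces exactly the first term $4\frac{\lambda^2}{N}\Vert{\bf w}(t)\Vert^2$ of \eqref{stimacru}.

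For the $B_i$ terms I would again drop the weights using \eqref{uno}, bound each integral by $\int_{t-\tau(t)}^t|\dot v_j(s)|\,ds$, and then apply Cauchy--Schwarz twice: once in the variable $s$, against the constant $1$ on an interval of length $\tau(t)$, which brings in a factor $\tau(t)$; and once in the index $j$, which brings in a factor $N$. This gives a bound on $|B_i(t)|^2$ that is \emph{independent of} $i$, namely $\frac{\lambda^2\tau(t)}{N}\int_{t-\tau(t)}^t\sum_{j=1}^N|\dot v_j(s)|^2\,ds$. Summing over the $N$ indices $i$, recognizing the quantity $R_\tau(t)$ defined in \eqref{NUM}, and using $\tau(t)\le\overline\tau$ from \eqref{tau1}, I obtain the second term $2\lambda^2\overline\tau\,R_\tau(t)$ after the factor $2$.

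The argument is conceptually routine; the only delicate point is the bookkeeping of the powers of $N$ through the two Cauchy--Schwarz steps in each estimate, and the careful conversion of $\sum_{i,j}|v_i-v_j|^2$ into $\Vert{\bf w}\Vert^2$ via \eqref{1}, so that the numerical constants $4$ and $2$ in \eqref{stimacru} come out exactly as stated rather than with a loss.
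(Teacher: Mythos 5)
Your proposal is correct and follows essentially the same route as the paper: the same splitting of $\dot v_i$ into an undelayed part plus $-\int_{t-\tau(t)}^t\dot v_j(s)\,ds$, the same use of \eqref{uno}, the elementary inequality $(a+b)^2\le 2a^2+2b^2$, Cauchy--Schwarz in $j$ and in $s$, and the identity \eqref{1}, yielding the constants $4$ and $2$ exactly as in \eqref{stimacru}. The only cosmetic difference is that the paper writes the undelayed part directly in terms of $w_j-w_i$ rather than $v_j-v_i$.
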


\begin{proof}
Using (\ref{delayModel}), we compute
\begin{equation*}
\begin{split}
\dot v_i (t) &=\frac {\lambda} N\sum_{j\neq  i} \psi_{ij}(t-\tau (t)) (w_j(t)-w_i(t))
+\frac {\lambda} N\sum_{j\neq  i} \psi_{ij}(t-\tau (t))(v_j(t-\tau (t)) -v_j(t))\\
&=
\frac {\lambda} N\sum_{j\neq  i} \psi_{ij}(t-\tau (t)) (w_j(t)-w_i(t))
-\frac {\lambda} N\sum_{j\neq  i} \psi_{ij}(t-\tau (t))\int_{t-\tau (t)}^t \dot v_j(s) ds.
\end{split}
\end{equation*}
Now, using (\ref{uno}), we get that
\begin{equation*}
\left\vert\dot v_i (t)\right\vert \leq  \frac {\lambda} N\sum_{j\neq  i}\vert w_j(t)-w_i(t)\vert +\frac {\lambda} N \sum_{j\neq  i}
\int_{t-\tau (t) }^t \left\vert\dot v_j (s)\right\vert ds.
\end{equation*}
Then,
\begin{equation*}
\begin{split}
\left \vert\dot v_i (t)\right \vert^2
&\leq  2 \frac {\lambda^2}{N^2} \left(
\sum_{j=1}^N \vert w_i(t)-w_j(t)\vert
\right)^2+ 2 \frac {\lambda^2}{N^2} \left(
\sum_{j=1}^N\int_{t-\tau (t) }^t \left \vert
\dot v_j (s)
\right\vert ds
\right)^2  \\
&\leq
2\frac {\lambda^2}{N} \sum_{j=1}^N \vert w_j(t)-w_i(t)\vert^2+2\frac {\lambda^2}{N}
\sum_{j=1}^N \left (
\int_{t-\tau (t)}^t \left \vert
\dot v_j (s)
\right\vert ds
\right )^2 .
\end{split}
\end{equation*}
Using (\ref{1}), the Cauchy-Schwarz inequality and (\ref{tau1}), we infer that
\begin{equation*}
\begin{split}
\sum_{i=1}^N\left \vert\dot v_i (t)\right \vert^2
&\leq  2\frac {\lambda^2} N\sum_{i,j=1}^N \vert w_i(t)-w_j(t)\vert^2 +2\lambda^2\tau (t)
\int_{t-\tau (t)}^t \sum_{i=1}^N \left \vert  \dot v_i(s)\right \vert^2 ds
\\
&\leq
4\lambda^2 \Vert {\bf w}(t)\Vert^2 +2\lambda^2\overline \tau \int_{t-\tau (t) }^t \sum_{i=1}^N
\left \vert  \dot v_i(s)\right \vert^2 ds,
\end{split}
\end{equation*}
which gives (\ref{stimacru}).
\end{proof}

\begin{Remark}{\rm
The term $R_\tau (t)$ is due to the presence of the time delay. Indeed, we
have two quantities  at the right-hand side of the inequality (\ref{stimacru}): the ``classical" term $\Vert w\Vert^2$ (coming from the undelayed model), and the term $R_\tau (t)$ caused by the delay effect.
}\end{Remark}

\begin{Lemma}
Given any solution $({\bf x}(\cdot),{\bf v}(\cdot))$ of \eqref{delayModel}, we have
\begin{equation}\label{N5statement}
\displaystyle{
\frac d {dt} \left ( \frac 1 N \Vert {\bf w}(t)\Vert^2\right )}
\displaystyle{\leq  - \frac { \gamma}{ N} \Vert {\bf w}(t)\Vert^2 +
\frac {\lambda^2\overline \tau } {\gamma}  R_\tau (t),
}
\end{equation}
for every $t>0$.
\end{Lemma}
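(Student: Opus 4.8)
The plan is to differentiate the quadratic quantity $\frac1N\|\mathbf{w}(t)\|^2$ along the trajectory and split the result into a dissipative ``consensus'' part, controlled by the Fiedler number, and a remainder generated by the delay, controlled by $R_\tau(t)$. First I would observe that $\sum_{i=1}^N w_i(t)=0$ for every $t$, so the contribution of $\dot{\bar v}$ drops out and
\begin{equation*}
\frac{d}{dt}\Big(\frac1N\|\mathbf{w}(t)\|^2\Big)=\frac2N\sum_{i=1}^N\langle\dot v_i(t),w_i(t)\rangle .
\end{equation*}
Then I would insert the decomposition of $\dot v_i$ already derived in the proof of Lemma \ref{cru}, namely
\begin{equation*}
\dot v_i(t)=\frac{\lambda}{N}\sum_{j\neq i}\psi_{ij}(t-\tau(t))(w_j(t)-w_i(t))-\frac{\lambda}{N}\sum_{j\neq i}\psi_{ij}(t-\tau(t))\int_{t-\tau(t)}^t\dot v_j(s)\,ds ,
\end{equation*}
which separates $\sum_i\langle\dot v_i,w_i\rangle$ into an undelayed interaction term $A$ and a delay term $B$.

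For the term $A$ I would use the symmetry $\psi_{ij}=\psi_{ji}$ to recognize $A=-\langle L(t)\mathbf{w}(t),\mathbf{w}(t)\rangle$; indeed, from the definition of $L$ one has $(L\mathbf{w})_i=\frac{\lambda}{N}\sum_{j\neq i}\psi_{ij}(w_i-w_j)$, in agreement with (\ref{2}). Since $L(t)$ is symmetric positive semidefinite with kernel $\Delta$ and $\mathbf{w}(t)\in\Delta^\perp$, the structural assumption (\ref{structural}) gives $\langle L(t)\mathbf{w}(t),\mathbf{w}(t)\rangle\geq\mu(t)\|\mathbf{w}(t)\|^2\geq\gamma\|\mathbf{w}(t)\|^2$, hence $\frac2N A\leq-\frac{2\gamma}{N}\|\mathbf{w}(t)\|^2$. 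The factor $2$ here is the crucial surplus of coercivity: half of it will be spent absorbing the delay term, leaving exactly $-\frac{\gamma}{N}\|\mathbf{w}\|^2$ in the final estimate.

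For the delay term I would bound $\psi_{ij}\leq1$ by (\ref{uno}), enlarge the sum over $j\neq i$ to all $j$ (adding nonnegative terms), and factor the double sum as a product,
\begin{equation*}
\frac2N|B|\leq\frac{2\lambda}{N^2}\Big(\sum_{j=1}^N\Big|\int_{t-\tau(t)}^t\dot v_j(s)\,ds\Big|\Big)\Big(\sum_{i=1}^N|w_i(t)|\Big).
\end{equation*}
A Cauchy--Schwarz inequality over $i$ gives $\sum_i|w_i|\leq\sqrt N\,\|\mathbf{w}\|$, while Cauchy--Schwarz in the time integral (using (\ref{tau1})) followed by Cauchy--Schwarz over $j$, together with the definition (\ref{NUM}), gives $\sum_j\big|\int_{t-\tau(t)}^t\dot v_j\,ds\big|\leq N\sqrt{\overline\tau\,R_\tau(t)}$. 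This reduces the bound to $\frac2N|B|\leq\frac{2\lambda}{\sqrt N}\sqrt{\overline\tau\,R_\tau(t)}\,\|\mathbf{w}(t)\|$.

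Finally I would close the estimate with Young's inequality $2ab\leq\gamma a^2+\frac1\gamma b^2$ applied to $a=\|\mathbf{w}(t)\|/\sqrt N$ and $b=\lambda\sqrt{\overline\tau\,R_\tau(t)}$, which produces exactly $\frac{\gamma}{N}\|\mathbf{w}(t)\|^2+\frac{\lambda^2\overline\tau}{\gamma}R_\tau(t)$. Combining this with $\frac2N A\leq-\frac{2\gamma}{N}\|\mathbf{w}\|^2$, the two copies of $\frac{\gamma}{N}\|\mathbf{w}\|^2$ collapse to one and yield (\ref{N5statement}). I expect the only delicate point to be the bookkeeping of constants in this last step: the Young weight must be chosen to equal $\gamma$ so that it simultaneously matches the coercivity surplus coming from the factor $2$ and reproduces the stated coefficient $\lambda^2\overline\tau/\gamma$; any other split would give a different (and generally worse) constant, so the whole estimate hinges on this balance.
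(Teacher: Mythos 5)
Your proof is correct and follows essentially the same route as the paper: the same decomposition of $\dot v_i$ into the undelayed Laplacian part (coercive by the structural assumption \eqref{structural}) plus a delay remainder, the same Cauchy--Schwarz bounds leading to $R_\tau$, and the same Young weight $\gamma$; your shortcut of writing $\frac{d}{dt}\|\mathbf{w}\|^2=2\sum_i\langle\dot v_i,w_i\rangle$ via $\sum_i w_i=0$ merely bypasses the paper's longer explicit computation of $\dot w_i$. The only nitpick is notational: the Laplacian appearing in your term $A$ is built from the weights $\psi_{ij}(t-\tau(t))$, so it is $L(t-\tau(t))$ rather than $L(t)$, exactly as in the paper, and the structural assumption is invoked at that shifted time.
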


\begin{proof}
Using \eqref{delayModel}, we compute
\begin{equation*}
\begin{split}
\dot w_i(t) &= \dot v_i(t)- \frac 1 N \sum_{k=1}^N\dot v_k(t) \\
&=\frac {\lambda} N\sum_{j\neq  i}\psi_{ij}(t-\tau (t) ) (v_j(t-\tau (t))-v_i(t))
-\frac {\lambda}{N^2} \sum_{k=1}^N \sum_{j\neq  k} \psi_{kj} (t-\tau (t) )(v_j(t-\tau (t))-v_k(t)) \\
&=
\frac {\lambda} N\sum_{j\neq  i}\psi_{ij}(t-\tau (t)) (v_j(t)-v_i(t))+
\frac {\lambda} N\sum_{j\neq  i}\psi_{ij}(t-\tau (t)) (v_j(t-\tau (t) )-v_j(t))
\\
&\quad
-\frac {\lambda}{N^2} \sum_{k=1}^N \sum_{j\neq  k} \psi_{kj} (t-\tau (t))(v_j(t)-v_k(t))
-\frac {\lambda}{N^2} \sum_{k=1}^N \sum_{j\neq  k} \psi_{kj} (t-\tau (t))(v_j(t-\tau (t))-v_j(t))
\\
&=
\frac {\lambda} N\sum_{j\neq  i}\psi_{ij}(t-\tau (t)) (w_j(t)-w_i(t))+
\frac {\lambda} N\sum_{j\neq  i}\psi_{ij}(t-\tau (t)) (v_j(t-\tau (t))-v_j(t))
\\
&\quad
-\frac {\lambda}{N^2} \sum_{k=1}^N \sum_{j\neq  k} \psi_{kj} (t-\tau (t))(w_j(t)-w_k(t))
-\frac {\lambda}{N^2} \sum_{k=1}^N \sum_{j\neq  k} \psi_{kj} (t-\tau (t))(v_j(t-\tau (t))-v_j(t)) .
\end{split}
\end{equation*}
Then,
\begin{equation*}
\sum_{i=1}^N w_i (t)\dot w_i (t)= -\frac 12\frac {\lambda} N\sum_{i,j=1}^N
\psi_{ij}(t-\tau (t) ) \vert w_i-w_j\vert^2
+\frac {\lambda} N
\sum_{i=1}^N \sum_{j\neq  i}   \psi_{ij}(t-\tau (t)) (v_j(t-\tau (t))-v_j(t)) w_i,
\end{equation*}
where we have used that $\sum_iw_i=0$ and
\begin{align*}
& \sum_{j\neq  i} \psi_{ij}(t-\tau (t))(w_j(t)-w_i(t))w_i(t) \\
=& \sum_{j\neq  i} \psi_{ij}(t-\tau (t))(w_j(t)-w_i(t))(w_i(t)-w_j(t)) + \sum_{j\neq  i} \psi_{ij}(t-\tau (t))(w_j(t)-w_i(t))w_j(t) \\
=& -\frac 12\sum_{j\neq  i} \psi_{ij}(t-\tau (t))\vert w_i(t)-w_j(t)\vert^2.
\end{align*}
Therefore, thanks to (\ref{2}), we infer that
\begin{equation*}
\begin{split}
&\frac d {dt} \left ( \frac 1 2 \Vert {\bf w}(t)\Vert^2\right )=-\langle L(t-\tau (t)) {\bf w}(t), {\bf w}(t)\rangle +\frac {\lambda} N
\sum_{i}\sum_{j\neq  i} \psi_{ij}(t-\tau (t)) (v_j(t-\tau (t))-v_j(t)) w_i(t) \\
&\quad =-\frac 12\frac {\lambda} N\sum_{i,j=1}^N
\psi_{ij}(t-\tau (t)) \vert w_i(t)-w_j(t)\vert^2+
\frac {\lambda} N
\sum_{i}\sum_{j\neq  i} \psi_{ij}(t-\tau (t)) (v_j(t-\tau (t))-v_j(t)) w_i(t).
\end{split}
\end{equation*}
The second term at the right-hand side of the above equality is bounded by
\begin{equation*}
\Big \vert \frac {\lambda} N
\sum_{i}\sum_{j\neq  i} \psi_{ij}(t-\tau (t)) (v_j(t-\tau (t))-v_j(t)) w_i(t)\Big\vert\leq  \frac \lambda N \Vert w(t)\Vert \, \Vert U(t)\Vert,
\end{equation*}
where $U(t)=(U_1(t),\dots, U_N(t))$ is defined by
\begin{equation*}
U_i(t)=\sum_{j\neq  i} \psi_{ij}(t-\tau (t)) (v_j(t-\tau (t))-v_j(t)), \quad i=1,\dots, N,
\end{equation*}
and is estimated by
\begin{equation*}
\Vert U(t)\Vert\leq  \sum_{i=1}^N \vert U_i(t)\vert\leq  \sum_{i=1}^N \sum_{j\neq  i}\psi_{ij}(t-\tau (t))\int_{t-\tau (t)}^t \left\vert \dot v_j (s)   \right\vert ds
\leq  \sum_{ij}\psi_{ij}(t-\tau (t))\int_{t-\tau (t)}^t \left\vert \dot v_j (s)   \right\vert ds .
\end{equation*}
Therefore, we get
\begin{equation*}
\begin{split}
\frac d {dt} \left ( \frac 1 2 \Vert {\bf w}(t)\Vert^2\right )
&\leq
-\frac 12\frac {\lambda} N\sum_{i,j=1}^N
\psi_{ij}(t-\tau (t)) \vert w_i(t)-w_j(t)\vert^2\\
&\qquad
+\frac {\lambda}N \sum_{ij}\psi_{ij}(t-\tau (t) )\int_{t-\tau (t)}^t \left\vert \dot v_j (s)   \right\vert ds\, \Vert {\bf w}(t)\Vert \\
&\leq
-\frac 12\frac {\lambda} N\sum_{i,j=1}^N
\psi_{ij}(t-\tau (t)) \vert w_i(t)-w_j(t)\vert^2 +\lambda\sum_{j=1}^N \int_{t-\tau (t)}^t \left\vert \dot v_j (s)\right\vert ds \, \Vert {\bf w}(t)\Vert \\
&\leq
-\frac 12\frac {\lambda} N\sum_{i,j=1}^N
\psi_{ij}(t-\tau (t) ) \vert w_i(t)-w_j(t)\vert^2 +\lambda \frac {\delta } 2 \Vert {\bf w}\Vert^2\\
&\qquad
+\frac{\lambda}{2\delta } \left(
\sum_{j=1}^N\int_{t-\tau (t)}^t \left\vert \dot v_j (s)\right\vert ds
\right)^2,
\end{split}
\end{equation*}
where we have used the Young inequality\footnote{This inequality states that, given any positive real numbers $a$, $b$ and $\delta$, we have $ab\leq \frac{a^2}{2\delta}+\frac{\delta b^2}{2}$.} for some arbitrary $\delta>0$.
Choosing $\delta=\frac {\gamma} {\lambda },$ where $\gamma$ is the constant in the structural assumption (\ref{structural}), we infer that
\begin{equation}\label{N5}
\begin{split}
\frac d {dt} \left ( \frac 1 2 \Vert {\bf w}(t)\Vert^2\right )
&\leq
-\langle L(t-\tau (t)) {\bf w}(t), {\bf w}(t)\rangle + \frac {\gamma} {2}  \Vert {\bf w}(t)\Vert^2 + \frac {\lambda^2}{2\gamma}\left(
\sum_{j=1}^N\int_{t-\tau (t)}^t \left\vert \dot v_j (s)\right\vert ds
\right)^2 \\
& \leq  - \frac {\gamma}{2 } \Vert {\bf w}(t)\Vert^2 +
\frac {\lambda^2\tau (t) } {2\gamma}\sum_{j=1}^N \int_{t-\tau (t)}^t \left \vert
\dot v_j(s) \right \vert^2 ds,
\end{split}
\end{equation}
which, using (\ref{tau1}) and the definition (\ref{NUM}) of $R_\tau(t),$ gives (\ref{N5statement}).
\end{proof}

We are now in a position to prove Theorem \ref{flock}.
Let $\beta>0$ be a positive constant to be chosen later.
We consider the Lyapunov functional along solutions of \eqref{delayModel}, defined by
\begin{equation}\label{Lyapunov}
{\mathcal L}(t) = 
\frac 1 {2N} \Vert {\bf w}(t)\Vert^2+\frac {\beta } N\int_{t-\tau (t)}^t e^{-(t-s)}\int_s^t \sum_{i=1}^N
\left \vert  \dot v_i(\sigma )\right \vert^2 d\sigma\, ds,
\end{equation}
Using (\ref{N5}) and Lemma \ref{cru}, we have
\begin{equation*}
\begin{split}
\dot{\mathcal L}(t)
&\leq   - \frac {\gamma }{2 N} \Vert {\bf w}(t)\Vert^2 +\frac {\lambda^2\overline \tau }{2\gamma } R_\tau (t)
+\frac {\beta\tau (t)} N \sum_{i=1}^N \left \vert \dot v_i(t) \right \vert^2
-\frac {\beta} N (1-\tau^\prime (t))e^{-\tau(t)} \int_{t-\tau (t)}^t \sum_{i=1}^N\left \vert \dot v_j(s) \right \vert^2 ds \\
&\hspace{1 cm} -\frac {\beta }N \int_{t-\tau (t)}^t e^{-(t-s)}
\int_s^t \sum_{i=1}^N \vert \dot v_i(\sigma )\vert^2 d\sigma\, ds\\
& \leq  -\frac 1 N\left (\frac \gamma {2}-4\lambda^2 \beta\overline \tau   \right )
\Vert {\bf w}(t)\Vert^2-\left(  \beta (1-c) e^{-\overline\tau }- \frac {\lambda^2\overline \tau } {2\gamma}  -2\beta \lambda^2{\overline \tau}^2\right)
R_\tau(t)\\
& \hspace{1 cm} -\frac {\beta }N \int_{t-\tau (t)}^t e^{-(t-s)}
\int_s^t \sum_{i=1}^N \vert \dot v_i(\sigma )\vert^2 d\sigma\, ds,
\end{split}
\end{equation*}
where we have used (\ref{tau1})--(\ref{tau3}).
Convergence to consensus will then be ensured if
\begin{equation}\label{parametri}
\frac \gamma {2}-4\beta \lambda^2\overline \tau >0, \qquad
\beta (1-c) e^{-\overline\tau }-\frac {\lambda^2\overline \tau } {2\gamma} -2\beta \lambda^2{\overline \tau }^2\geq  0.
\end{equation}
The second inequality of (\ref{parametri}) yields a first restriction on the size of the delay, namely, that $ \overline\tau^2 e^{\overline \tau} < \frac {1-c}{2\lambda^2 }$.
Let us now choose the constant $\beta>0$ in the definition (\ref{Lyapunov}) of ${\mathcal L}(\cdot )$ so that both conditions in (\ref{parametri}) are satisfied:
\begin{equation*}\label{rangebeta}
\frac {\lambda^2\overline \tau }{2\gamma} \frac 1 {(1-c) e^{-\overline\tau }-2\lambda^2{\overline\tau}^2 }\le\beta<
\frac {\gamma }{8\lambda^2\overline\tau }.
\end{equation*}
This is possible only if
$$
\frac {\lambda^2\overline\tau^2}{(1-c) e^{-\overline\tau }-2\lambda^2\overline\tau^2 }< \frac {\gamma^2}{4\lambda^2},
$$
which is equivalent to
$$
\overline\tau^2 e^{\overline \tau } <\tau_0,
$$ with $\tau_0$ defined by \eqref{threshold}.
We conclude that, if $\overline \tau^2 e^{\overline\tau }<\tau_0,$ then we can choose $\beta$ such that
\begin{equation}\label{c1}
\frac {d\mathcal L}{dt}(t)\leq  - r {\mathcal L}(t),
\end{equation}
for a suitable positive constant $r$.
In particular, in order to obtain the best decay rate with our procedure, we fix
$\beta = \frac {\lambda^2 \overline \tau}{2\gamma} \frac 1 {(1-c)e^{-\overline\tau } -2\lambda^2\overline \tau^2 }$
obtaining
(\ref{c1}) with $r$ as in (\ref{erre}).

To conclude, it suffices to write that
\begin{equation*}
\frac 1 N \Vert {\bf w}(t)\Vert^2\leq  2 {\mathcal L}(t)\leq  {2\mathcal L}(0) \, e^{-rt}.
\end{equation*}

Then (\ref{Flock1}) follows from the latter inequality, (\ref{V})  and  (\ref{1})
with $C={\mathcal L}(0)$ as in the statement.

\section{Consensus for nonsymmetric potentials: $L^\infty$ analysis}\label{FL}
\setcounter{equation}{0}

\subsection{The main result}

In this section, we consider nonsymmetric potentials, and we perform a $L^\infty$ analysis as in \cite{MT}.
We consider the Cucker-Smale system

\begin{equation}\label{delayModel2}
\begin{split}
\dot x_i(t) &= v_i(t),\\
\dot v_i(t) &= \frac{\lambda}{{N}} \sum_{j\neq  i} a_{ij}(t-\tau (t) )(v_j(t-\tau (t) )-v_i(t)),\quad i=1,\dots, N,
\end{split}
\end{equation}
with initial conditions, for $i=1,\ldots, N,$
\begin{equation*}
x_i(t)=f_i(t), \quad v_i(t)=g_i(t), \qquad t\in [-\tau (0),0],
\end{equation*}
where $f_i, g_i: [-\tau (0), 0]\rightarrow \RR$ are given functions and $a_{ij}>0$ quantifies the pairwise influence of $j^{\textrm{th}}$ agent on the alignment of  $i^{\textrm{th}}$ agent.
By rescaling $\lambda$ if necessary (or by time reparametrization), we assume that
\begin{equation}\label{normal}
\frac 1 N \sum_{j\neq  i} a_{ij}<1.
\end{equation}
This includes for instance the case considered in previous section, that is
$$
a_{ij}(t)=  {\psi (\vert x_i(t)-x_j(t)\vert )},
 $$
 with $\psi :[0, +\infty )\rightarrow [0, +\infty )$ satisfying $\psi (r)< 1$ for every $r\geq  0,$ but we can
consider a nonsymmetric interaction, for instance like in (\ref{potnonsym}),
$$
a_{ij}(t)= \frac {N\psi (\vert x_i(t)-x_j(t)\vert )} {\sum_{k=1}^N \psi (\vert x_k(t)-x_i(t)\vert )},
$$
for a suitable bounded function $\psi.$

As said before, an analogous delay model has been also investigated in \cite{Choi} for $\tau$ constant and under a  restrictive assumption on the potential interaction.
Indeed, the authors there consider the problem
\begin{equation}\label{Choi}
\begin{split}
\dot x_i(t) &= v_i(t),\\
\dot v_i(t) &=  \sum_{j=1}^N \Phi_{ij}(x, \tau )(v_j(t-\tau )-v_i(t)),\quad i=1,\dots, N,
\end{split}
\end{equation}
where the communication weights are defined in 
(\ref{potentialChoi}).
The choice of communication rates as in (\ref{potentialChoi}) simplifies significantly the model. Indeed, it follows that
$$\sum_{j=1}^N \Phi_{ij} =1,\quad \forall \ i=1,\dots, N,$$
and then one can rewrite the velocity equation as
$$\dot v_i(t) =  \sum_{j=i}^N \Psi_{ij}(x,\tau )v_j(t-\tau )-v_i(t),\quad i=1,\dots, N.$$
Namely, the term depending on $v_i$ in the left-hand side of (\ref{Choi}) is not
$\frac {\lambda} N\sum_{j\ne i}a_{ij}(t-\tau) v_i(t),$ as for the more general model (\ref{delayModel2}), but simply $-v_i(t).$
This simplifies the analysis allowing immediately to get a uniform bound for all times on the velocities of the agents if this bound is satisfied from the initial velocities
(see \cite[Lemma 2.1]{Choi}).

\medskip

Following \cite{MT}, we set $a_{ii}=N-\sum_{j\neq  i} a_{ij}$, so that $\sum_{j=1}^N a_{ij}=N$, $i=1, \ldots, N$.
Setting
\begin{equation}\label{vibar}
\tilde{v}_i(t)=\frac 1 N\sum_{j=1}^N a_{ij}(t-\tau (t)) v_j(t), \quad i=1, \ldots, N,
\end{equation}
the system (\ref{delayModel2}) is written as
\begin{equation*}\label{delayModelRewrite}
\begin{split}
\dot x_i(t)&= v_i(t),\\
\dot v_i(t)&=\lambda (\tilde{v}_i(t)-v_i(t))+\frac{\lambda}{N}\sum_{j\neq  i} a_{ij}(t-\tau (t))(v_j(t-\tau (t))-v_j(t)),\quad i=1,\dots, N.
\end{split}
\end{equation*}
We denote by $d_X(t)$ and $d_V(t)$ the diameter in position and velocity phase spaces (see \cite{HT}), respectively defined by
$$
d_X(t)=\max_{i,j}\vert x_j(t)-x_i(t)\vert, \qquad
d_V(t)=\max_{i,j}\vert v_j(t)-v_i(t)\vert.
$$
A solution of \eqref{delayModel2} converges to consensus if
\begin{equation*}
\sup_{t\geq  0} d_X(t)<+\infty\quad\mbox{\rm and}\quad \lim_{t\rightarrow +\infty }d_V(t)=0.
\end{equation*}
Note that the functions $d_X$ and $d_V$ are not of class $C^1$ in general. We will thus use a suitable notion of generalized gradient, namely the upper Dini derivative, as in \cite{Delay1}, in order to perform our computations.
We recall that, for a given function $F$ continuous at $t$, the upper Dini derivative of $F$ at $t$ is defined by
$$
D^+F(t)=\limsup_{h\rightarrow 0^+}\frac {F(t+h)-F(t)} h.
$$
If $F$ is differentiable at $t,$ then $D^+F(t)=\frac {dF}{dt}(t).$ However, for all $t$ there exists a sequence $h_n\rightarrow 0^+$ such that
$$D^+F(t)=\lim_{n\rightarrow+\infty }\frac {F(t+h_n)-F(t)} {h_n}.$$
In particular, for a given t, there exist indices $r$ and $s$ such that
$d_X(t)=\vert x_r(t)-x_s(t)\vert $ and a sequence
$h_n\rightarrow 0^+$ for which
\begin{align*}
D^+d_X(t)&=\lim_{n\rightarrow\infty } h_n^{-1}
\left \{d_X(t+h_n)- \vert x_r(t)-x_s(t)\vert\right \}
\\
&\leq  \lim_{n\rightarrow\infty } h_n^{-1}
\left \{\vert x_r(t+h_n)-x_s(t+h_n)\vert- \vert x_r(t)-x_s(t)\vert\right \}
\leq\Big\vert \frac {dx_r}{dt}(t)-\frac {dx_s}{dt}(t)\Big\vert.
\end{align*}
Analogous arguments apply to $D^+d_V(t)$ and $D^+d^2_V(t).$

\begin{Theorem}\label{flock2}
We assume that there exists $\psi^*>0$ such that
\begin{equation}\label{structural2}
\frac 1 {N^2} \sum_{i,j=1}^N\min \left( a_{qi}a_{pj}, a_{qj}a_{pi}\right) \geq  {\psi^*}, \quad\forall\  p, q =1,\dots, N.
\end{equation}
Setting
\begin{equation}\label{def2tau0}
\tau_0= \frac {1-c} {\lambda}\,\frac {{\psi^*}}{{\psi^*}+2},
\end{equation}
if $\overline \tau e^{\overline\tau }\in(0,\tau_0)$, then every solution of $(\ref{delayModel2})$ satisfies
\begin{equation}\label{exp2}
d_V(t)\leq  C e^{-rt}, \quad t\geq  0,
\end{equation}
with
\begin{equation}\label{rsecondo}
r=\min \Big \{ \, \lambda \Big ( {\psi^*} -\frac {2\lambda\overline\tau }{(1-c)e^{-\overline\tau }-\lambda\overline\tau }  \Big )\,, 1\, \Big\},
\end{equation}
\begin{equation}\label{Csecondo}
C=d_V(0)+\frac {2\lambda}{(1-c) e^{-\overline\tau }-\lambda\overline\tau }\int_{-\tau (0)}^0 e^s \int_s^0\max_{j=1,\dots, N}\left\vert \dot v_j(\sigma)\right\vert d\sigma\, ds.
\end{equation}
\end{Theorem}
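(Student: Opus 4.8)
The plan is to carry out a diameter-based $L^\infty$ analysis in the spirit of \cite{MT}, controlling the upper Dini derivative of $d_V$ and absorbing the delay through a Lyapunov correction term, in exact parallel with the proof of Theorem \ref{flock}. Throughout I would set $R_\tau(t)=\int_{t-\tau(t)}^t \max_{j}\vert\dot v_j(s)\vert\,ds$, the $L^\infty$ analogue of the quantity (\ref{NUM}). The first step is the counterpart of Lemma \ref{cru}: writing the velocity equation as $\dot v_i(t)=\lambda(\tilde v_i(t)-v_i(t))+\frac{\lambda}{N}\sum_{j\neq i}a_{ij}(t-\tau(t))(v_j(t-\tau(t))-v_j(t))$, using $v_j(t-\tau(t))-v_j(t)=-\int_{t-\tau(t)}^t\dot v_j(s)\,ds$ and the normalization (\ref{normal}), one bounds the undelayed term by $\lambda d_V(t)$ and the delayed increment by $\lambda R_\tau(t)$, so that $\max_j\vert\dot v_j(t)\vert\le \lambda d_V(t)+\lambda R_\tau(t)$.

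The core step is the delayed analogue of (\ref{N5statement}). For fixed $t$ I would choose extremal indices $p,q$ with $d_V(t)=\vert v_p-v_q\vert$, so that $D^+d_V(t)\le \frac{1}{d_V}\langle v_p-v_q,\dot v_p-\dot v_q\rangle$, and decompose $\dot v_p-\dot v_q=\lambda(\tilde v_p-\tilde v_q)-\lambda(v_p-v_q)+\lambda(D_p-D_q)$, where $D_i$ collects the delay increments. The contraction is extracted from the undelayed part by projecting all velocities onto $e=(v_p-v_q)/d_V$ and setting $\pi_k=\langle e,v_k\rangle$: extremality of the pair $(p,q)$ forces $\pi_q\le\pi_k\le\pi_p$ for every $k$, with $\pi_p-\pi_q=d_V$. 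Writing $\tilde v_p-\tilde v_q=\frac{1}{N^2}\sum_{i,j}a_{pi}a_{qj}(v_i-v_j)$ (legitimate since $\frac1N\sum_k a_{pk}=\frac1N\sum_k a_{qk}=1$), subtracting the symmetric weight $\min(a_{pi}a_{qj},a_{pj}a_{qi})$, which annihilates against the antisymmetric factor $\pi_i-\pi_j$, and using $\pi_i-\pi_j\le d_V$, the structural assumption (\ref{structural2}) yields $\langle v_p-v_q,\tilde v_p-\tilde v_q\rangle\le(1-\psi^*)d_V^2$. Thus the undelayed part contributes $-\lambda\psi^* d_V^2$; bounding $\vert\langle v_p-v_q,D_p-D_q\rangle\vert\le 2d_V R_\tau(t)$ exactly as in Lemma \ref{cru}, I obtain $D^+d_V(t)\le -\lambda\psi^* d_V(t)+2\lambda R_\tau(t)$.

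I would then close the argument with the Lyapunov functional $\mathcal L(t)=d_V(t)+\beta\int_{t-\tau(t)}^t e^{-(t-s)}\int_s^t\max_j\vert\dot v_j(\sigma)\vert\,d\sigma\,ds$ for a constant $\beta>0$ to be tuned. Differentiating the double integral (the $s=t$ boundary term vanishes, the $s=t-\tau(t)$ term produces $-(1-\tau'(t))e^{-\tau(t)}R_\tau(t)$, and the interior term produces $\max_j\vert\dot v_j(t)\vert\,(1-e^{-\tau(t)})$ together with a reproduced copy of the functional's integral) and inserting the first lemma and (\ref{tau1})--(\ref{tau3}), I reach a bound in which the coefficient of $d_V$ is $-(\lambda\psi^*-\beta\lambda\overline\tau)$ and that of $R_\tau$ is $-[\beta(1-c)e^{-\overline\tau}-2\lambda-\beta\lambda\overline\tau]$, plus a negative copy of the integral term. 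Requiring both brackets to have the right sign forces $\frac{2\lambda}{(1-c)e^{-\overline\tau}-\lambda\overline\tau}\le\beta<\frac{\psi^*}{\overline\tau}$, whose feasibility is precisely the condition $\overline\tau e^{\overline\tau}<\tau_0$ with $\tau_0$ as in (\ref{def2tau0}). Taking $\beta$ equal to the lower endpoint cancels the $R_\tau$-coefficient and gives $D^+\mathcal L\le -r\mathcal L$ with $r$ as in (\ref{rsecondo}); integrating and using $d_V(t)\le\mathcal L(t)\le\mathcal L(0)e^{-rt}$ yields (\ref{exp2}) with $C=\mathcal L(0)$ as in (\ref{Csecondo}).

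The main obstacle is the contraction estimate $\langle v_p-v_q,\tilde v_p-\tilde v_q\rangle\le(1-\psi^*)d_V^2$: this is where the lack of symmetry is genuinely handled, and it rests on the projection/ordering argument combined with the symmetric \emph{min} structure of (\ref{structural2}). The delay contributions, by contrast, are controlled by the same integral-remainder and Young/Cauchy--Schwarz bookkeeping as in the $L^2$ analysis, the only additional care being the systematic use of upper Dini derivatives since $d_V$ is not of class $C^1$.
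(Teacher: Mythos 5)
Your proposal is correct and follows essentially the same route as the paper: the bound $\max_j\vert\dot v_j(t)\vert\le\lambda d_V(t)+\lambda\sigma_\tau(t)$, the dissipation estimate $D^+d_V\le-\lambda\psi^*d_V+2\lambda\sigma_\tau$, the Lyapunov functional $d_V(t)+\beta\int_{t-\tau(t)}^te^{-(t-s)}\int_s^t\max_j\vert\dot v_j(\sigma)\vert\,d\sigma\,ds$, and the same feasibility window for $\beta$ leading to (\ref{def2tau0}) and (\ref{rsecondo}). The only cosmetic difference is that where the paper invokes the Motsch--Tadmor skew-symmetric lemma (Lemma \ref{MT}) to get $\langle v_p-v_q,\tilde v_p-\tilde v_q\rangle\le(1-\psi^*)d_V^2$, you prove that inequality directly by the projection/min-subtraction argument, which is in substance the proof of that lemma.
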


Assumption $(\ref{structural2})$ is done in order to ensure an unconditional convergence result. It is satisfied for instance, for the interactions (\ref{potsym}) and (\ref{potnonsym}) if the influence function $\psi$ in the definitions of $a_{ij}$ satisfies the lower bound $\psi (r)\geq  \psi_0>0.$ Indeed, $(\ref{structural2})$ is verified in both cases with $\psi^*={\psi_0}^2$ and
$\psi^*= \left (\frac {\psi_0} {\Vert\psi\Vert_\infty}\right )^2$ respectively.

\subsection{Proof of Theorem \ref{flock2}}
We start by establishing several estimates.

\begin{Lemma}\label{MT}
$(\cite{MT}).$ Let $S=(S)_{1\leq i,j\leq N}$ be a skew-symmetric matrix such that $\vert S_{ij}\vert\leq  M$ for all $i,j$. Let $u, w\in \RR^N$ be two given real vectors with nonnegative entries, $u_i, w_i\geq  0,$ and let $\overline U =\frac 1 N\sum_i u_i$ and $\overline W=\frac 1 N\sum_i w_i$.
Then,
\begin{equation}\label{MTsimplier}
\frac 1 {N^2}\vert \langle Su,w\rangle \vert \leq  M\Big (\overline U\overline W -\frac 1 {N^2}\sum_{i,j=1}^N\min\,
\left( u_iw_j, u_jw_i\right) \Big ).
\end{equation}
\end{Lemma}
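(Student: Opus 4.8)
The plan is to exploit the skew-symmetry of $S$ to symmetrize the bilinear form $\langle Su,w\rangle$, and then to reduce everything to the elementary identity $|a-b| = a+b-2\min(a,b)$, valid for nonnegative reals $a,b$.

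First I would write out $\langle Su,w\rangle = \sum_{i,j} S_{ij} u_j w_i$. Relabelling the summation indices $i\leftrightarrow j$ and using $S_{ji}=-S_{ij}$ gives $\langle Su,w\rangle = -\sum_{i,j} S_{ij} u_i w_j$, so averaging the two expressions yields the antisymmetrized form $\langle Su,w\rangle = \frac12\sum_{i,j} S_{ij}(u_j w_i - u_i w_j)$. This is the one place where skew-symmetry is genuinely used. Taking absolute values and invoking $\vert S_{ij}\vert \le M$ then gives $\vert\langle Su,w\rangle\vert \le \frac{M}{2}\sum_{i,j}\vert u_i w_j - u_j w_i\vert$.

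Next I would apply the identity $\vert a-b\vert = a+b-2\min(a,b)$ with $a=u_i w_j$ and $b=u_j w_i$, both nonnegative since $u,w$ have nonnegative entries. This gives $\sum_{i,j}\vert u_i w_j - u_j w_i\vert = \sum_{i,j}(u_i w_j + u_j w_i) - 2\sum_{i,j}\min(u_i w_j, u_j w_i)$. Since $\sum_{i,j} u_i w_j = \big(\sum_i u_i\big)\big(\sum_j w_j\big) = N^2\,\overline U\,\overline W$, and symmetrically for $\sum_{i,j} u_j w_i$, the first double sum equals $2N^2\,\overline U\,\overline W$. Substituting this back and dividing through by $N^2$ produces exactly the claimed inequality $(\ref{MTsimplier})$.

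I do not expect any genuine obstacle: the whole lemma rests on the symmetrization step together with the min-identity, and the remainder is bookkeeping. The only point requiring mild care is the index relabelling, where one must track which of $u,w$ carries the index $i$ versus $j$, so that the resulting $\min$ term is symmetric in the pair $(u_i w_j,\, u_j w_i)$ as stated.
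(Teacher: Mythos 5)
Your proof is correct. The paper does not actually prove this lemma---it is quoted from Motsch and Tadmor \cite{MT}---but your argument (symmetrizing $\langle Su,w\rangle$ via skew-symmetry to get $\frac12\sum_{i,j}S_{ij}(u_jw_i-u_iw_j)$, then using $\vert a-b\vert=a+b-2\min(a,b)$ and $\sum_{i,j}u_iw_j=N^2\overline U\,\overline W$) is precisely the standard proof of that result, and every step checks out.
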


\begin{Lemma}\label{derivate}
Let $(({\bf x}(\cdot),{\bf v}(\cdot))$ be a solution of $(\ref{delayModel2}).$
Setting
\begin{equation}\label{resto2}
\sigma_\tau (t)=\int_{t-\tau (t) }^t \max_{j=1,\dots, N}\left \vert \dot v_j   (s)\right \vert\, ds ,
\end{equation}
we have, for every $t\geq  0,$
\begin{equation}\label{stimadV}
D^+d_X(t)\leq  d_V(t),\qquad
D^+d_V(t)\leq  -\lambda {\psi^*} d_V(t)+2\lambda \sigma_\tau (t) ,
\end{equation}
where $\psi^*$ is the constant in $(\ref{structural2})$.
\end{Lemma}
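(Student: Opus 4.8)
The first inequality is immediate: by exactly the Dini-derivative argument already displayed for $d_X$, there is a pair $(r,s)$ realizing $d_X(t)=\vert x_r(t)-x_s(t)\vert$ along a sequence $h_n\to 0^+$, and since $\dot x_i=v_i$ one gets $D^+d_X(t)\le\vert\dot x_r(t)-\dot x_s(t)\vert=\vert v_r(t)-v_s(t)\vert\le d_V(t)$. The whole work is in the second inequality, and the plan is to run the analogous Dini argument for $d_V$ and then extract a genuine decay term via the Motsch--Tadmor Lemma \ref{MT}.

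I would fix a pair $(p,q)$ with $d_V(t)=\vert v_p(t)-v_q(t)\vert$ and set $e=v_p(t)-v_q(t)$, so $\vert e\vert=d_V(t)$, assumed positive (the case $d_V(t)=0$ gives $\tilde v_p=\tilde v_q$ and is handled directly). The analogue of the $d_X$ computation yields $D^+d_V(t)\le d_V(t)^{-1}\langle e,\dot v_p(t)-\dot v_q(t)\rangle$. Inserting the rewritten form of \eqref{delayModel2}, namely $\dot v_i=\lambda(\tilde v_i-v_i)+R_i$ with $R_i=\frac{\lambda}{N}\sum_{j\neq i}a_{ij}(t-\tau(t))(v_j(t-\tau(t))-v_j(t))$, I would split
\[
\langle e,\dot v_p-\dot v_q\rangle=\lambda\langle e,\tilde v_p-\tilde v_q\rangle-\lambda\vert e\vert^2+\langle e,R_p-R_q\rangle ,
\]
where the middle term is exactly $-\lambda\,d_V(t)^2$. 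For the delay term, writing $v_j(t-\tau(t))-v_j(t)=-\int_{t-\tau(t)}^t\dot v_j(s)\,ds$ and using $\frac1N\sum_{j\neq i}a_{ij}<1$ from \eqref{normal}, I would bound $\vert R_i\vert\le\lambda\,\sigma_\tau(t)$, hence $\vert\langle e,R_p-R_q\rangle\vert\le d_V(t)(\vert R_p\vert+\vert R_q\vert)\le 2\lambda\,d_V(t)\,\sigma_\tau(t)$.

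The crux is the estimate $\vert\langle e,\tilde v_p-\tilde v_q\rangle\vert\le(1-\psi^*)\,d_V(t)^2$. Using $\frac1N\sum_j a_{pj}=\frac1N\sum_i a_{qi}=1$ (from $\sum_j a_{ij}=N$) to insert a redundant averaging factor, and then relabelling summation indices, I would recast
\[
\langle e,\tilde v_p-\tilde v_q\rangle=\frac{1}{N^2}\sum_{i,j=1}^N a_{qi}\,a_{pj}\,\langle e,v_j-v_i\rangle=\frac{1}{N^2}\langle S u,w\rangle ,
\]
with $u=(a_{p1},\dots,a_{pN})$ and $w=(a_{q1},\dots,a_{qN})$ having nonnegative entries and means $\overline U=\overline W=1$, and with $S_{ij}=\langle e,v_j-v_i\rangle$ skew-symmetric satisfying $\vert S_{ij}\vert\le\vert e\vert\,\vert v_j-v_i\vert\le d_V(t)^2=:M$. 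Applying Lemma \ref{MT} and then the structural assumption \eqref{structural2}, which says precisely $\frac1{N^2}\sum_{i,j}\min(a_{qi}a_{pj},a_{qj}a_{pi})\ge\psi^*$, gives $\frac{1}{N^2}\vert\langle Su,w\rangle\vert\le M(1-\psi^*)=(1-\psi^*)d_V(t)^2$. Collecting the three contributions then yields $\langle e,\dot v_p-\dot v_q\rangle\le-\lambda\psi^* d_V(t)^2+2\lambda\,d_V(t)\,\sigma_\tau(t)$, and dividing by $d_V(t)$ gives \eqref{stimadV}.

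The main obstacle is the algebra of the third paragraph: casting $\langle e,\tilde v_p-\tilde v_q\rangle$ as a bilinear form $\frac1{N^2}\langle Su,w\rangle$ with a genuinely skew-symmetric matrix built from the velocity differences, and matching the resulting $\min$-term exactly to the hypothesis \eqref{structural2}. Identifying $u,w$ with the rows $a_{p\cdot},a_{q\cdot}$ (so that the means are $1$) and $S$ with the velocity matrix $\langle e,v_j-v_i\rangle$ is the decisive step that turns the $-\lambda\vert e\vert^2$ term into the strict decay rate $-\lambda\psi^* d_V$; the remaining manipulations are routine bookkeeping.
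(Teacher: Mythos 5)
Your proof is correct and follows essentially the same route as the paper: the same decomposition of $\dot v_p-\dot v_q$ into the $\tilde v$-term, the $-\lambda d_V^2$ term, and the delay remainder, the same rewriting $\tilde v_p-\tilde v_q=\frac{1}{N^2}\sum_{i,j}a_{qi}a_{pj}(v_j-v_i)$, and the same application of Lemma \ref{MT} with the rows $a_{p\cdot},a_{q\cdot}$ and the skew-symmetric matrix of velocity differences. The only cosmetic difference is that the paper works with $D^+(d_V^2)$ and divides by $2d_V$ at the end, whereas you work with $D^+d_V$ directly; the content is identical.
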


\begin{proof}
Fix $t\geq  0$ and let $p, q, r $ and $s$ be indices such that $d_X(t)=\vert x_r(t)-x_s(t)\vert$ and $d_V(t)=\vert v_p(t)-v_q(t)\vert$. Then we have $D^+d_X(t)\leq  \vert v_r(t)-v_s(t)\vert\leq  d_V(t)$, and besides,
\begin{equation*}
\begin{split}
D^+(d_V^2 (t)) &\leq  2\left\langle v_p-v_q, \dot v_p(t)-\dot v_q(t)\right\rangle \\
&= 2\lambda \langle v_p(t)-v_q(t), \tilde v_p(t)-\tilde v_q(t)\rangle -2\lambda\vert v_p(t)-v_q(t)\vert^2\\
&Ê\quad +2\frac {\lambda }N\Big \langle v_p(t)-v_q(t), \sum_{j\neq  p} a_{pj}(t-\tau (t) )(v_j(t-\tau (t))-v_j(t)) \\
&\qquad\qquad\qquad\qquad\qquad\qquad -\sum_{j\neq  q} a_{qj}(t-\tau (t)) (v_j(t-\tau (t) )-v_j(t))\Big\rangle ,
\end{split}
\end{equation*}
where $\tilde v_i,$ $i=1,\dots, N,$ are defined in (\ref{vibar}),
and then
\begin{equation*}
\begin{split}
D^+(d_V^2 (t))
& \leq  2\left\langle v_p(t)-v_q(t), \dot v_p(t)-\dot v_q(t) \right\rangle  \\
& = 2\lambda \langle v_p(t)-v_q(t), \tilde v_p(t)-\tilde v_q(t)\rangle -2\lambda\vert v_p(t)-v_q(t)\vert^2 \\
&\qquad\qquad\qquad\qquad\qquad\qquad +4\lambda \vert v_p(t)-v_q(t)\vert\int_{t-\tau (t) }^t\max_{j=1, \dots, N}\left\vert \dot v_j  (s)\right\vert\, ds .
\end{split}
\end{equation*}
But since
\begin{equation*}
\begin{split}
\tilde v_p(t)-\tilde v_q(t) &= \frac 1 N\sum_{j=1}^N a_{pj}(t-\tau (t) ) v_j(t)- \frac 1 N\sum_{i=1}^N a_{qi}(t-\tau (t)) v_i(t)\\
& = \frac 1 {N^2}\sum_{i=1}^N a_{qi}(t-\tau (t) )\sum_{j=1}^N a_{pj}(t-\tau (t) ) v_j(t)\\
&\qquad - \frac 1 {N^2}\sum_{j=1}^N a_{pj}(t-\tau (t) ) \sum_{i=1}^N a_{qi}(t-\tau (t) ) v_i(t) \\
& =  \frac 1 {N^2}\sum_{i, j=1}^N a_{qi}(t-\tau (t) ) a_{pj}(t-\tau (t) ) (v_j(t)- v_i(t)) ,
\end{split}
\end{equation*}
we get that
\begin{equation}\label{p2}
\begin{split}
D^+(d_V^2(t))\leq &
\ 2\frac {\lambda }{N^2} \sum_{i, j=1}^N a_{qi}(t-\tau (t) ) a_{pj}(t-\tau (t) ) \langle v_j(t)- v_i(t), v_p(t)-v_q(t)\rangle \\
& -2\lambda\vert v_p(t)-v_q(t)\vert^2 +4\lambda \vert v_p(t)-v_q(t)\vert\int_{t-\tau (t) }^t\max_{j=1,\dots, N}\left\vert \dot v_j  (s)\right\vert\, ds.
\end{split}
\end{equation}
We estimate the first term at the right-hand side of (\ref{p2}) by applying Lemma \ref{MT}, with
$S_{ij}=\langle v_j(t)-v_i(t), v_p(t)-v_q(t)\rangle$ and $ u_i=a_{qi}(t-\tau )$ and $w_j= a_{pj}(t-\tau (t))$ for $i,j=1, \ldots, N$.
Since $\vert S_{ij}\vert \leq  d_V^2(t)$ and $\overline U, \overline W=1,$ using Assumption (\ref{structural2}), we infer from (\ref{MTsimplier}) with $\theta = {\psi^*}$ that
\begin{equation*}
\Big\vert \frac 1 {N^2}\sum_{i, j=1}^N a_{qi}(t-\tau (t)) a_{pj}(t-\tau (t)) \langle v_j(t)-v_i(t), v_p(t)-v_q(t)\rangle\Big\vert \leq  (1-{\psi^*}) d_V^2(t).
\end{equation*}
With the above estimate, we obtain from (\ref{p2}) that
\begin{equation*}
D^+(d_V^2(t))\leq  2\lambda (1-{\psi^*}) d_V^2(t)-2\lambda d_V^2(t) +4\lambda d_V(t)\sigma_\tau (t),
\end{equation*}
from which (\ref{stimadV}) follows.
\end{proof}

\begin{Lemma}\label{stimapunt}
Let $({\bf x}(\cdot),{\bf v}(\cdot))$ be a solution of $(\ref{delayModel2})$. Then
\begin{equation}\label{SP}
\max_{j=1, \dots, N} \left\vert \dot v_j  (t)\right\vert \leq  \lambda d_V(t)+\lambda  \sigma_\tau (t),
\end{equation}
for every $t\geq 0$.
\end{Lemma}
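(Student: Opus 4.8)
The plan is to reproduce, at the level of pointwise velocity bounds, the delay-decoupling trick already used in the proof of Lemma \ref{cru}. The starting point is the velocity equation in \eqref{delayModel2}, namely $\dot v_i(t) = \frac{\lambda}{N}\sum_{j\neq i} a_{ij}(t-\tau(t))(v_j(t-\tau(t))-v_i(t))$. First I would write the delayed velocity in terms of the present one by integrating the derivative over the delay window, that is,
\begin{equation*}
v_j(t-\tau(t))-v_i(t) = (v_j(t)-v_i(t)) - \int_{t-\tau(t)}^t \dot v_j(s)\,ds.
\end{equation*}
Substituting this into the right-hand side splits $\dot v_i(t)$ into an \emph{undelayed} alignment term $\frac{\lambda}{N}\sum_{j\neq i} a_{ij}(t-\tau(t))(v_j(t)-v_i(t))$ and a \emph{delay-correction} term $-\frac{\lambda}{N}\sum_{j\neq i} a_{ij}(t-\tau(t))\int_{t-\tau(t)}^t \dot v_j(s)\,ds$, exactly mirroring the decomposition in Lemma \ref{cru}.

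Next I would take absolute values and apply the triangle inequality to these two terms separately. For the undelayed term, each factor $\vert v_j(t)-v_i(t)\vert$ is bounded by the velocity diameter $d_V(t)$, which pulls $d_V(t)$ out of the sum and leaves $\frac{\lambda}{N}\sum_{j\neq i} a_{ij}(t-\tau(t))$; invoking the normalization \eqref{normal}, which guarantees $\frac 1 N\sum_{j\neq i}a_{ij}<1$, this contribution is at most $\lambda\, d_V(t)$. For the delay-correction term, I would bound each integrand $\vert\dot v_j(s)\vert$ by $\max_{k}\vert\dot v_k(s)\vert$, so that every integral becomes exactly $\sigma_\tau(t)$ as defined in \eqref{resto2}; again factoring out $\frac{\lambda}{N}\sum_{j\neq i}a_{ij}(t-\tau(t))<\lambda$ yields the bound $\lambda\,\sigma_\tau(t)$ on this term.

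Combining the two estimates gives $\vert\dot v_i(t)\vert\leq \lambda\, d_V(t)+\lambda\,\sigma_\tau(t)$ for each fixed $i$. Since the right-hand side does not depend on $i$, I may take the maximum over $j=1,\dots,N$ on the left to obtain the claimed inequality \eqref{SP}. There is no genuine obstacle in this argument: it is a direct computation, and the only point requiring care is the consistent use of the normalization \eqref{normal} in both terms so that each coefficient sum is controlled by $\lambda$. The structural assumption \eqref{structural2} plays no role here; it enters only in Lemma \ref{derivate} and in the final Lyapunov/Gronwall argument, whereas the present estimate is a purely kinematic bound needed to close the delay feedback through $\sigma_\tau(t)$.
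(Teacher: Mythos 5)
Your proposal is correct and follows essentially the same route as the paper: the same decomposition of $v_j(t-\tau(t))-v_i(t)$ into the undelayed difference plus the integral of $\dot v_j$ over the delay window, the same bound of the first part by $d_V(t)$ and of the second by $\sigma_\tau(t)$ via the normalization \eqref{normal}, and the same final maximization over the index. Nothing to add.
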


\begin{proof}
Using (\ref{delayModel2}), we have
\begin{equation*}
\dot v_i  (t)=\frac {\lambda }N \sum_{j\neq  i}a_{ij}(t-\tau (t))(v_j(t)-v_i(t))  + \frac {\lambda }N \sum_{j\neq  i}a_{ij}(t-\tau (t) )(v_j(t-\tau (t))-v_j(t)),
\end{equation*}
from which we infer that
\begin{equation*}
\left\vert \dot v_i  (t)\right\vert \leq
\frac {\lambda }N \sum_{j\neq  i}a_{ij}(t-\tau (t)) d_V(t)+ \frac
{\lambda }N \sum_{j\neq  i}a_{ij}(t-\tau (t))\int_{t-\tau (t)}^t \left\vert \dot v_j  (s)\right\vert  \,ds .
\end{equation*}
Then, we have
$$\left\vert \dot v_i (t)\right\vert \leq
\lambda d_V(t)+\lambda \int_{t-\tau (t)}^t \max_{j=1,\dots, N}\left\vert \dot v_j  (s)\right\vert  \,ds,$$
and the lemma is proved by taking the maximum in the left-hand side and using the definition (\ref{resto2}) of $\sigma_\tau (t).$
\end{proof}

We are now in a position to prove the theorem. Let $\beta >0$ to be chosen later. We consider the Lyapunov functional defined along any solution by
\begin{equation}\label{LyaF}
\mathcal{F} (t)=  d_V(t)+\beta \int_{t-\tau (t)}^t e^{-(t-s)}\,\int_s^t \max_{j=1,\dots,N}\left\vert \dot v_j  (\sigma )\right\vert  \, d\sigma\,ds.
\end{equation}

First of all, using (\ref{stimadV}), we have
$$\begin{array}{l}
\displaystyle{
D^+\mathcal{F}(t)\leq  -\lambda {\psi^*} d_V(t)+2\lambda \sigma_\tau (t)}\\
\hspace{1,4 cm} \displaystyle{ -\beta (1-\tau^\prime (t)) e^{-\tau (t)} \int_{t-\tau (t) }^t \max_{j=1,\dots, N} \left\vert \dot v_j  (s)\right\vert \, ds +\beta \tau (t)
\max_{j=1,\dots, N} \left\vert \dot v_j  (t)\right\vert}\\
\hspace{2 cm} \displaystyle{-\beta \int_{t-\tau (t)}^t e^{-(t-s)}\,\int_s^t \max_{j=1,\dots,N}\left\vert \dot v_j  (\sigma )\right\vert  \, d\sigma\,ds.}
\end{array}
$$
It follows from Lemma \ref{stimapunt}, using (\ref{tau1})-(\ref{tau3}), that
\begin{equation*}
\begin{split}
D^+\mathcal{F}(t) &\leq  -\lambda {\psi^*}d_V(t)+ (2\lambda -\beta (1-c)e^{-\overline\tau })\sigma_\tau (t)+\beta\overline \tau\lambda  d_V(t)\\
&\hspace{1 cm}
+\beta\overline \tau\lambda \int_{t-\tau (t)}^t \max_{j=1, \dots, N}\left\vert \dot v_j  (s)\right\vert\, ds 
-\beta \int_{t-\tau (t)}^t e^{-(t-s)}\,\int_s^t \max_{j=1,\dots,N}\left\vert \dot v_j  (\sigma )\right\vert  \, d\sigma\,ds
\\
& \leq  -\lambda ({\psi^*} -\beta\overline \tau )d_V(t)- (\beta (1-c)e^{-\overline\tau } -2\lambda - \beta\overline \tau\lambda  ) \sigma_\tau (t)\\
&\hspace{1 cm}-\beta \int_{t-\tau (t)}^t e^{-(t-s)}\,\int_s^t \max_{j=1,\dots,N}\left\vert \dot v_j  (\sigma )\right\vert  \, d\sigma\,ds.
\end{split}
\end{equation*}
Convergence to consensus will be ensured if
\begin{equation}\label{condit.parametri}
 {{\psi^*}} -\overline \tau\beta >0, \qquad
\beta (1-c)e^{-\overline\tau }-2\lambda -\beta\overline \tau\lambda  \geq  0.
\end{equation}
The second inequality of (\ref{condit.parametri}) yields a first restriction on the size of the delay: $\overline \tau  e^{\overline\tau }<\frac {1-c} {\lambda }$. Let us now choose the constant $\beta>0$ in the definition (\ref{LyaF}) of the Lyapunov functional $\mathcal{F}$ so that both conditions in \eqref{condit.parametri} are satisfied. We impose that $\beta < \frac {{\psi^*}}{\overline\tau }$ and $\beta \geq  \frac {2\lambda }{ (1-c)e^{-\overline\tau }-\lambda\overline \tau }$. This is possible if and only if $\frac {2\lambda }{ (1-c)e^{-\overline\tau }-\lambda\overline \tau } < \frac {{\psi^*}}{\overline \tau }$, that is, equivalently, $\overline\tau e^{\overline\tau } <\tau_0$, where $\tau_0$ is defined by \eqref{def2tau0}.

We now choose $\beta$ in the definition of $\mathcal{F}$ such that
\begin{equation}\label{richiama}
D^+\mathcal{F}(t)\leq  -r {\mathcal F}(t),
 \end{equation}
for a suitable positive constant $r$. In order to have a better decay estimate let us fix $\beta= \frac {2\lambda }
{(1-c)e^{-\overline\tau }-\lambda\overline\tau }.$ Then, we obtain (\ref{richiama}) with $r$ as in
(\ref{rsecondo}).
 Therefore,
$$d_V(t)\leq  \mathcal{F}(t)\leq  \mathcal{F}(0) e^{-r t} \,,\quad t\ge 0\,.$$
The exponential decay estimate (\ref{exp2}) is then proved with $C=\mathcal {F}(0)$ as in (\ref{Csecondo}).

\section{A numerical simulation}
We provide here a numerical simulation illustrating our results. We take $d=2$, $N=3$ ($3$ agents), and we take the Cucker and Smale potential $\psi(s)=\frac{1}{(1+s^2)^\beta}$ with $\beta=2$. As recalled in the introduction, for such a value of $\beta$ convergence to consensus does not occur for any initial condition.

For the moment, we do not consider any time delay in the model, i.e., $\tau\equiv 0$. We take as initial conditions
\begin{align*}
&x_1^0 = (0,0),\quad v_1^0 = (1,0),\\
&x_2^0 = (0,1),\quad v_2^0 = (1,0),\\
&x_2^0 = (1,0),\quad v_2^0 = (0.5,0.5).
\end{align*}
For such initial conditions, we have convergence to consensus, see Figures \ref{fig_0_10} and \ref{fig_0_50}. On these figures, the initial points are represented with a red point. At the top left are drawn the curves $t\mapsto x_i(t)\in\mathbb{R}^2$: motion in the plane of the three agents. At the top right, one can see the modulus of the speeds $\Vert v_i(t)\Vert$, in function of $t$. AT the bottom are drawn the time evolution of the position variance $X(t)$ and of the speed variance $V(t)$.

\begin{figure}[H]
\centerline{\includegraphics[width=12cm]{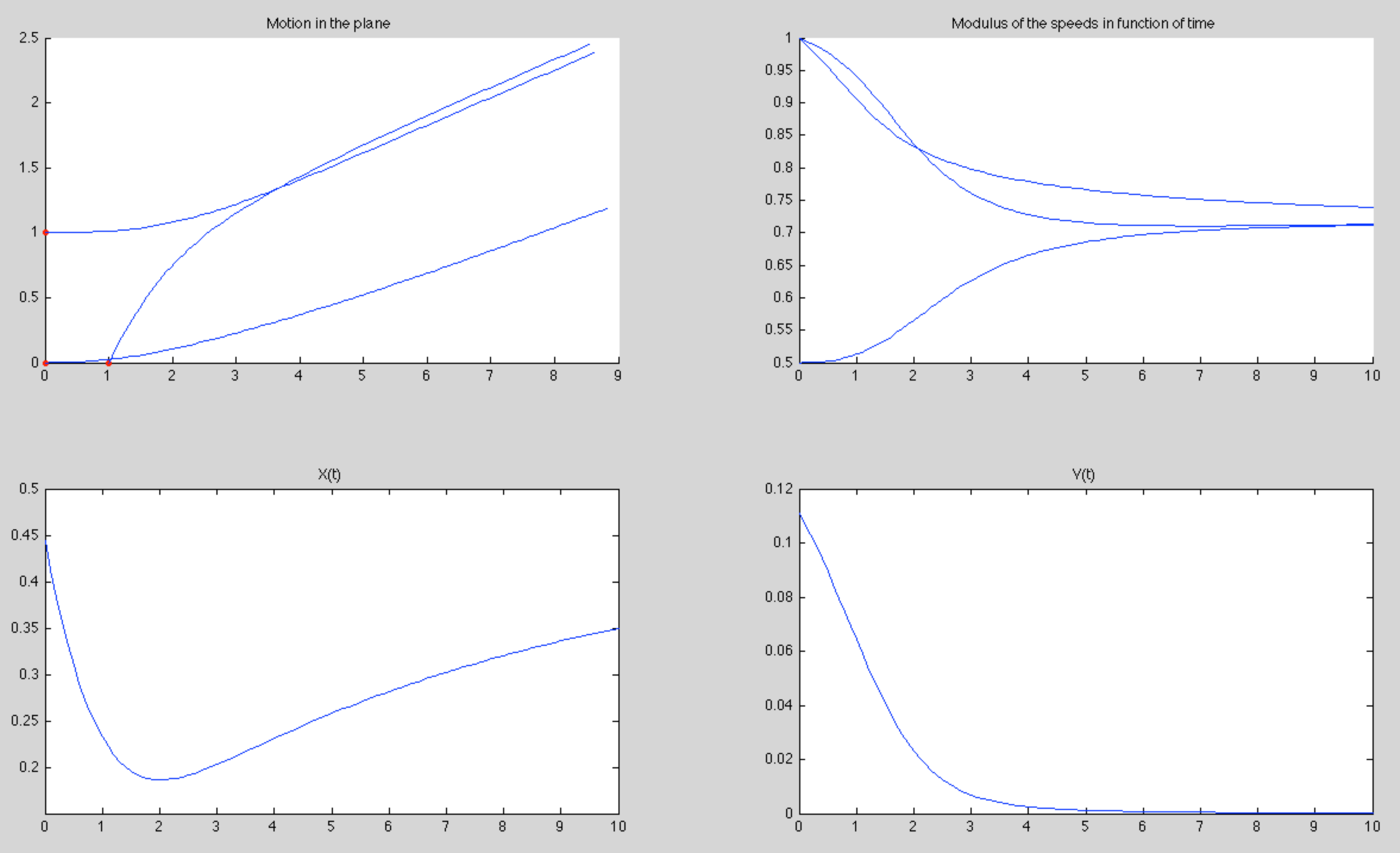}}
\caption{No time delay, $\tau=0$. Simulation on the time interval $[0,10]$.}
\label{fig_0_10}
\end{figure}
\begin{figure}[H]
\centerline{\includegraphics[width=12cm]{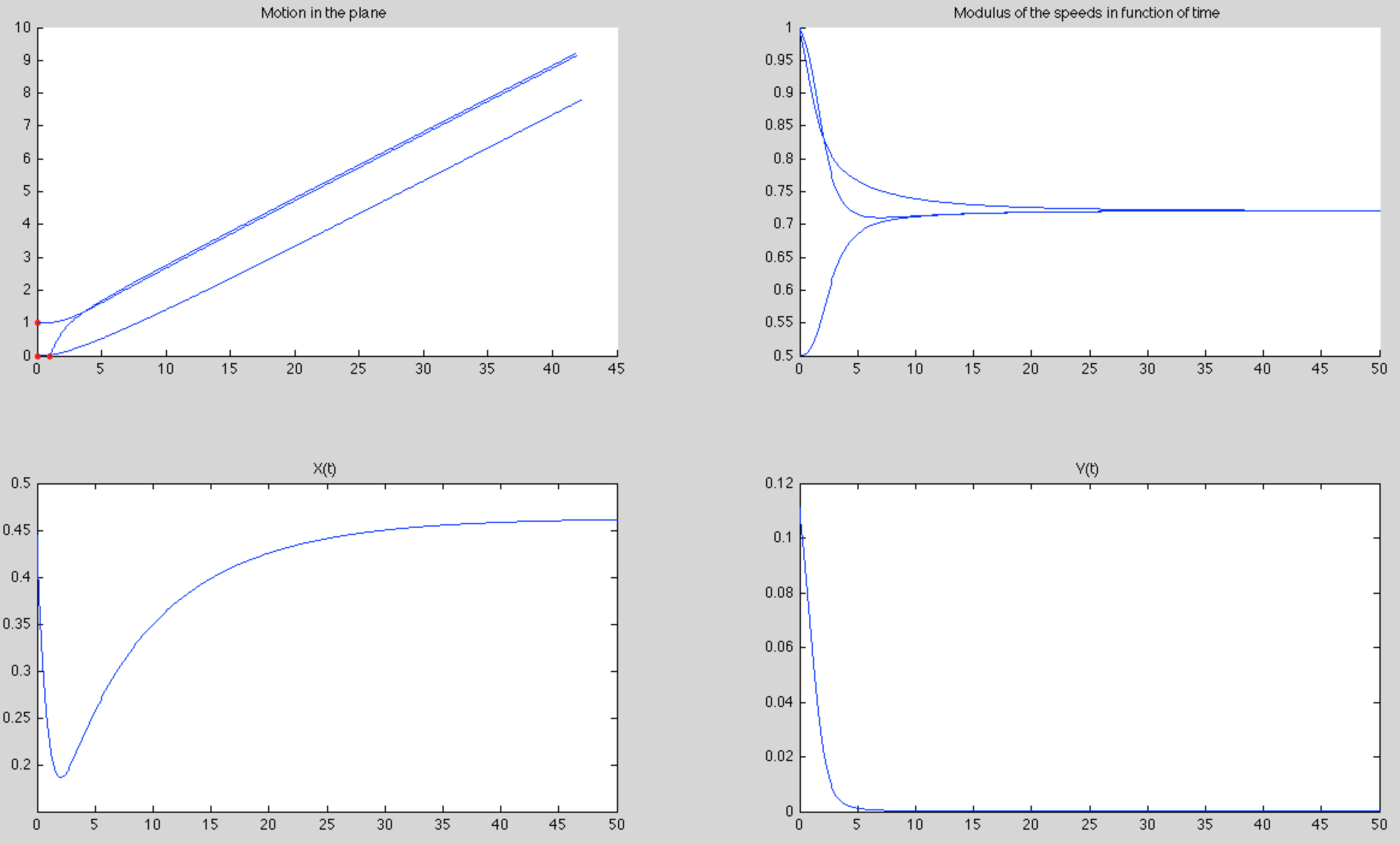}}
\caption{No time delay, $\tau=0$. Simulation on the time interval $[0,50]$.}
\label{fig_0_50}
\end{figure}

We now introduce a time delay which, for simplicity, we take fixed: $\tau(t)\equiv\tau$. We take as initial conditions, on $[-\tau,0]$,
$$
x_i(t) = x_i^0 + (t+\tau) v_i^0, \quad v_i(t)=v_i^0,\qquad i=1,\ldots,N.
$$
In other words, along the interval $[-\tau,0]$ the agents follow the dynamics $\dot x_i=v_i$ and $\dot v_i=0$, and thus each agent performs a translation motion, starting at $x_i^0$ with the speed $v_i^0$.

The corresponding solution for $\tau=5$ is drawn on Figure \ref{fig_5_20}. For this value of the time delay, convergence to consensus is lost. When time goes to infinity, the agents do not remain grouped, and one can indeed observe that the position variance $X(t)$ tends to $+\infty$.

\begin{figure}[H]
\centerline{\includegraphics[width=12cm]{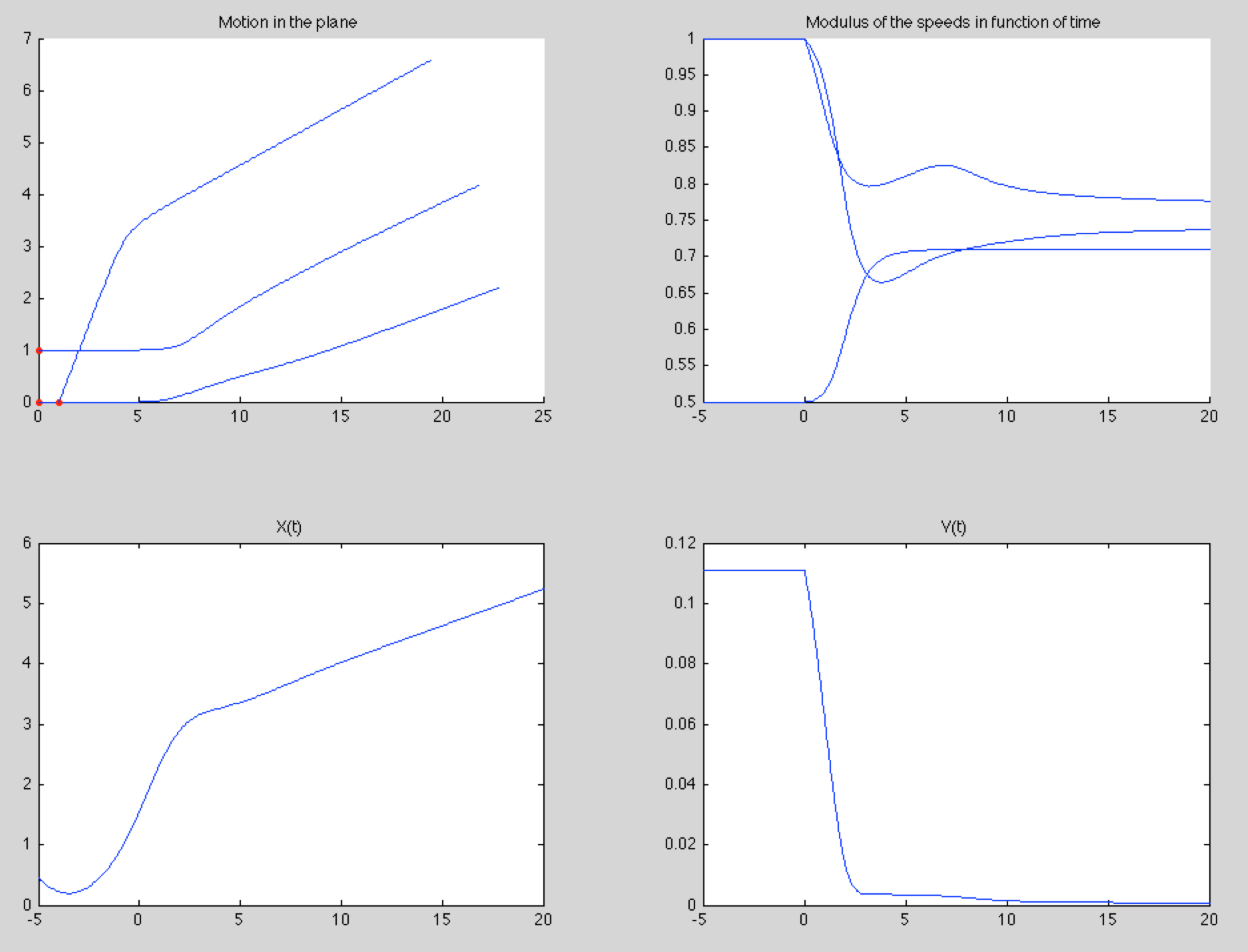}}
\caption{Time delay $\tau=5$. Simulation on the time interval $[-5,20]$.}
\label{fig_5_20}
\end{figure}

The loss of consensus actually occurs for smaller values of $\tau$, but we chose here to provide a simulation for $\tau=5$ because, if we take $\tau$ smaller, we have to consider much larger integration times to see that consensus is lost, and simulations are then not so nice to be printed here.

Numerically, we find that, for $\tau$ less than (approximately) $0.5$, consensus still occurs, whereas for larger values of $\tau$ consensus is lost. This threshold is slightly larger than the threshold $\tau_0$ predicted by our result (which is not sharp).

\section{Conclusion and further comments}\label{Conclusion}

We have analyzed the finite-dimensional general Cucker-Smale model with time-varying time-delays, and we have established precise convergence results to consensus under appropriate assumptions on the time-delay function $\tau(\cdot)$. Our results are valid for symmetric as well as for nonsymmetric interaction rates. The symmetric case has been analyzed thanks to a $L^2$ analysis, in the spirit of the original papers \cite{CS1, CS2}, while we were able to deal with the loss of symmetry by carrying out a $L^\infty$ analysis as in \cite{MT}. 

In both cases, we have established convergence to consensus provided the time-delay is below a precise threshold. The bound depends on the  coupling strength $\lambda$, on the communication weights and on the bound $c$ on the time-derivative of $\tau(\cdot)$, but it does not depend on the number $N$ of the agents.
This important fact suggests that it might be possible to extend our analysis performed here on the finite-dimensional Cucker-Smale model to the infinite-dimensional case. 

\paragraph{Towards a kinetic extension.}
The kinetic equation for the undelayed Cucker-Smale model has been derived in \cite{HT} using the BBGKY hierarchy from the Cucker-Smale particle model as a mesoscopic description for flocking (see also \cite{HL, PRT}).
By considering the mean-field limit in the case $\tau=0,$ one obtains the kinetic equation
$$
\partial_t\mu + \langle v,\mathrm{grad}_x\mu\rangle + \mathrm{div}_v \left( (\xi[\mu])\mu \right) = 0 ,
$$
where $\mu(t) = \mu(t,x,v)$ is the density of agents at time $t$ at $(x,v),$ 
with the interaction field defined by
$$
\xi[\mu](x,v) = \int_{\RR^d\times\RR^d} \psi(\vert x-y\vert) (w-v) \, d\mu(y,w) .
$$
If we introduce a delay $\tau$ in the Cucker-Smale system, even when $\tau$ is constant, it is no clear how to deduce the corresponding kinetic model.
In contrast, it is easy to pass to the mean-field limit when one considers a Cucker-Smale model with communication weights as in (\ref{potentialChoi}): indeed, the authors of \cite{Choi},  putting a delay on $x_j$ but not on $x_i$ in the communication weights in the equation for $v_i,$ are able to pass to the mean-field limit and obtain the kinetic equation
$$
\partial_t\mu(t) + \langle v,\mathrm{grad}_x\mu(t)\rangle + \mathrm{div}_v \left( (\xi[\mu(t-\tau)])\mu(t) \right) = 0 .
$$
Deriving an appropriate kinetic equation by considering the mean-field limit of (\ref{delayModel}), with communication weights depending on the states at time $t-\tau $ for all the agents, as it is, in our opinion, more adequate from a physical point of view, seems out of reach at this moment. We let it as an open question.

\end{document}